\newtheorem{theorem}{Theorem}[section]
\newtheorem{lemma}[theorem]{Lemma}
\newtheorem{proposition}[theorem]{Proposition}
\newtheorem{corollary}[theorem]{Corollary}
\theoremstyle{definition}
\newtheorem{definition}[theorem]{Definition}
\theoremstyle{remark}
\newtheorem{remark}[theorem]{Remark}
\newtheorem{claim}[theorem]{Claim}
\newcommand{\omu}{\overline{\mu}}
\newcommand{\e}{\varepsilon}
\begin{document}
\title[Milnor invariants, $2n$-moves and $V^{n}$-moves for welded string links]{Milnor invariants, $2n$-moves and $V^{n}$-moves \\ for welded string links}

\author[Haruko A. Miyazawa]{Haruko A. Miyazawa}
\address{Institute for Mathematics and Computer Science, Tsuda University,
2-1-1 Tsuda-Machi, Kodaira, Tokyo 187-8577, Japan}
\curraddr{}
\email{aida@tsuda.ac.jp}
\thanks{}

\author[Kodai Wada]{Kodai Wada}
\address{Faculty of Education and Integrated Arts and Sciences, Waseda University, 1-6-1 Nishi-Waseda, Shinjuku-ku, Tokyo 169-8050, Japan}
\curraddr{}
\email{k.wada8@kurenai.waseda.jp}
\thanks{The second author was supported by a Grant-in-Aid for JSPS Research Fellow (\#17J08186) of the Japan Society for the Promotion of Science.}

\author[Akira Yasuhara]{Akira Yasuhara}
\address{Faculty of Commerce, Waseda University, 1-6-1 Nishi-Waseda, Shinjuku-ku, Tokyo 169-8050, Japan}
\curraddr{}
\email{yasuhara@waseda.jp}
\thanks{The third author was partially supported by a Grant-in-Aid for Scientific Research (C) (\#17K05264) of the Japan Society for the Promotion of Science and a Waseda University Grant for Special Research Projects (\#2018S-077).}

\subjclass[2010]{57M25, 57M27}

\keywords{Welded string links, Milnor invariants, $2n$-moves, $V^{n}$-moves, self-crossing virtualization, arrow calculus.}




\begin{abstract}
In a previous paper, the authors proved that Milnor link-homotopy invariants modulo~$n$ classify classical string links up to $2n$-move and link-homotopy. 
As analogues to the welded case, in terms of Milnor invariants, 
we give here two classifications of welded string links up to $2n$-move and  self-crossing virtualization, and up to $V^{n}$-move and self-crossing virtualization, respectively.  
\end{abstract}

\maketitle

\section{Introduction}\label{sec-intro}
In~\cite{M54,M57} J.~Milnor defined a family of classical link invariants, known as {\em Milnor $\omu$-invariants}. 
Given an $m$-component classical link $L$, 
Milnor invariants $\omu_{L}(I)$ are indexed by a finite sequence of elements in $\{1,\ldots,m\}$. 
In~\cite{HL} N.~Habegger and X.-S.~Lin introduced the notion of {\em classical string links} and defined Milnor invariants for classical string links. 
These invariants are called {\em Milnor $\mu$-invariants}. 
It is remarkable that $\mu$-invariants for non-repeated sequences classify classical string links up to link-homotopy~\cite{HL} 
(whereas $\omu$-invariants are not enough strong to classify classical  links with four or more components up to link-homotopy~\cite{L}). 
Here the {\em link-homotopy} is the equivalence relation on classical (string) links generated by the self-crossing change and ambient isotopy~\cite{M54}. 

A {\em $2n$-move} is a local move as illustrated in Figure~\ref{2n-move}. 
The $2n$-moves were probably first studied by S. Kinoshita in 1957~\cite{K57}. 
Since then, $2n$-moves have been well-studied in Knot Theory. 
In particular, the connections between $2n$-moves and classical link invariants are increasingly well-understood; 
see for example~\cite{K80,P,DP02,DP04}.
Recently, the authors~\cite{MWY19} established the relation between Milnor link-homotopy invariants and $2n$-moves as follows.

\begin{figure}[htbp]
  \begin{center}
    \begin{overpic}[width=11cm]{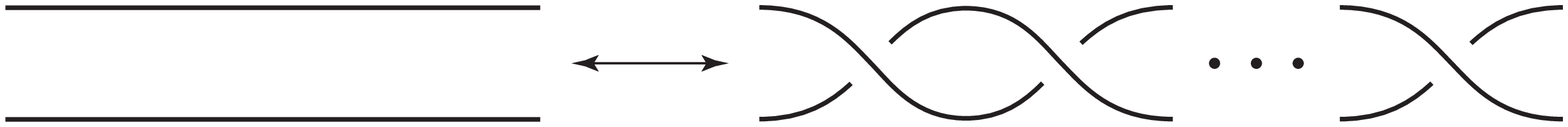}
      \put(172,-2){{\footnotesize $1$}}
      \put(210.5,-2){{\footnotesize $2$}}
      \put(287,-2){{\footnotesize $2n$}}
    \end{overpic}
  \end{center}
  \caption{$2n$-move}
  \label{2n-move}
\end{figure}

\begin{theorem}[{\cite[Theorem 1.1]{MWY19}}]
\label{th-classical}
Let $n$ be a positive integer. 
Two classical string links $\sigma$ and $\sigma'$ are $(2n+{\rm lh})$-equivalent if and only if 
$\mu_{\sigma}(I)\equiv\mu_{\sigma'}(I)\pmod{n}$ for any non-repeated sequence~$I$. 
\end{theorem}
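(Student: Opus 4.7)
The plan is to prove both implications by analyzing how a $2n$-move acts on the longitudes read in the Milnor group (the nilpotent reduced free group on meridians), and then appealing to the Habegger--Lin classification of classical string links up to link-homotopy.

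For \emph{necessity}, Milnor's original arguments give link-homotopy invariance of $\mu(I)$ for non-repeated $I$, so it suffices to show that a single $2n$-move changes $\mu_{\sigma}(I)$ by a multiple of $n$ for every non-repeated $I$. A $2n$-move between strands $i$ and $j$ alters the $i$-th longitude by replacing a factorization $\ell_i = uv$ with $u \cdot m_j^{\pm n} \cdot v$ (the $2n$ half-twists correspond to $n$ full wraps of strand $i$ around strand $j$), and symmetrically for $\ell_j$. Since $m_j^{\pm n}-1$ is a power series in the single variable $X_j$ whose $X_j$-coefficient is $\pm n$ and whose constant term is $0$, its pairing against a multilinear monomial $X_{i_1}\cdots X_{i_{k-1}}$ with distinct $i_p$ forces exactly one $X_j$-factor to be contributed by the middle. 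Summing over the possible splittings of the monomial across $u$, $m_j^{\pm n}-1$, and $v$ yields a change divisible by $n$.

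For \emph{sufficiency}, assume $\mu_{\sigma}(I)\equiv\mu_{\sigma'}(I)\pmod{n}$ for all non-repeated $I$, and replace $\sigma$ by the stacking product $\tau = \sigma\cdot(\sigma')^{-1}$, so $\mu_{\tau}(I)\equiv 0 \pmod{n}$. I would proceed by induction on the length $k$ of non-repeated sequences: assuming $\mu_{\tau}(J)=0$ for all non-repeated $J$ with $|J|<k$, construct for each length-$k$ non-repeated $I$ a \emph{local realizer} $T_I$---obtained from the trivial string link by a single $2n$-move together with self-crossing changes---whose first non-vanishing non-repeated Milnor invariant is $\mu_{T_I}(I)=\pm n$. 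Stacking an appropriate integer power of $T_I$ onto $\tau$ kills $\mu_{\tau}(I)$ without perturbing any lower-weight invariant. Iterating exhausts all non-repeated Milnor data, and Habegger--Lin then forces the residue of $\tau$ to be link-homotopically trivial, completing the $(2n+{\rm lh})$-equivalence to the trivial string link.

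The main obstacle is the construction of the realizers $T_I$ and the verification that a single $2n$-move inserted into an appropriately iterated Bing-doubled tangle contributes exactly $\pm n$ to the targeted length-$k$ invariant and $0$ to every lower-weight one. Natural candidates come from $2n$-moves performed on Bing doublings modelled on the rooted tree encoding $I$; checking the Milnor computation requires expanding longitudes along the Milnor filtration and tracking the contributions from both the inserted $n$-th power $m_j^{\pm n}$ and the conjugating factors $u,v$. This bookkeeping, rather than any single conceptual obstruction, is where essentially all of the technical work of the proof concentrates.
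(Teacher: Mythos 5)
Your proposal follows, in outline, precisely the proof of the cited source~\cite{MWY19} (the present paper only quotes this statement as Theorem~\ref{th-classical}, but its welded analogues --- Theorem~\ref{th-inv-Vn} via the Magnus expansion of longitudes under Milnor's $\eta_{q}$, and Proposition~\ref{prop-rep-2nSV} via the normal form of Theorem~\ref{th-rep-SV} together with the realizer Lemma~\ref{lem-del-wtree} and Habegger--Lin --- exhibit the same two-pronged scheme): Magnus-expansion divisibility for necessity, and $2n$-move realizers modelled on tree/Bing-doubled tangles for sufficiency. The differences are cosmetic and harmless: you kill the invariants of $\sigma*(\sigma')^{-1}$ inductively instead of reducing both string links to a clasper normal form with exponents taken modulo $n$, and the inserted word in the longitude is really a product of $n$ conjugates of $m_{j}^{\pm1}$ by $i$-meridians rather than literally $u\,m_{j}^{\pm n}\,v$, which does not affect the coefficients of the non-repeated monomials you pair against.
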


\noindent
Here, the {\em $(2n+{\rm lh})$-equivalence} is the equivalence relation generated by the $2n$-move, self-crossing change and ambient isotopy. 

The set $\mathcal{SL}(m)$ of $m$-component classical string links has a monoid structure under the {\em stacking product}. 
Habegger and Lin proved that the set of link-homotopy classes of $\mathcal{SL}(m)$ forms a torsion free group of rank $s_{m}=\sum_{r=2}^{m}(r-2)!\binom{m}{r}$~\cite[Section 3]{HL}.  
We remark that the quotient $\mathcal{SL}(m)/(2n+\rm{lh})$ of $\mathcal{SL}(m)$ under $(2n+{\rm lh})$-equivalence forms a finite group generated by elements of order~$n$, and that the order of the group is $n^{s_{m}}$~\cite[Corollary 1.2]{MWY19}. 

The notion of {\em welded string links} was introduced by R.~Fenn, R.~Rim\'anyi and C.~Rourke~\cite{FRR}. 
M.~Goussarov, M.~Polyak and O.~Viro essentially proved that two
classical string links are equivalent as welded objects if and only if they are equivalent as classical objects~\cite[Theorem 1.B]{GPV}. 
Therefore, welded string links can be viewed as a natural extension of classical string links. 
The study of welded string links has recently become an area of active interest; see for example~\cite{BD16,BD17,ABMW-JMSJ,ABMW-Pisa,ABMW-MMJ,MY}. 

In~\cite{ABMW-Pisa} B.~Audoux, P.~Bellingeri, J.-B.~Meilhan and E.~Wagner defined Milnor invariants, 
denoted by $\mu^{{\rm w}}$, for welded string links and proved that the $\mu^{{\rm w}}$-invariants for non-repeated sequences classify welded string links up to {\rm sv}-equivalence. 
Here, the {\em {\rm sv}-equivalence} is the equivalence relation generated by the self-crossing virtualization and welded isotopy. 
A {\em crossing virtualization} is a local move replacing a classical crossing with virtual one, and a {\em self-crossing virtualization} is a crossing virtualization involving two strands of a single component. 
The {\rm sv}-equivalence is indeed a natural extension of link-homotopy in the sense that two classical string links are {\rm sv}-equivalent if and only if they are link-homotopic~\cite[Theorem 4.3]{ABMW-JMSJ}. 

As analogues of Theorem~\ref{th-classical} to the welded case, we show the following two theorems. 

\begin{theorem}\label{th-2nsv} 
Let $n$ be a positive integer. 
Two $m$-component welded string links $\sigma$ and $\sigma'$ are $(2n+{\rm sv})$-equivalent if and only if 
$\mu^{{\rm w}}_{\sigma}(I)\equiv\mu^{{\rm w}}_{\sigma'}(I)\pmod{n}$ for any non-repeated sequence $I$, and 
$\mu^{{\rm w}}_{\sigma}(ij)-\mu^{{\rm w}}_{\sigma}(ji)=\mu^{{\rm w}}_{\sigma'}(ij)-\mu^{{\rm w}}_{\sigma'}(ji)$ for any $1\leq i<j\leq m$. 
\end{theorem}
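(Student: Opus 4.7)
The plan is to handle the two directions separately, modelled on the classical argument of Theorem~\ref{th-classical} and using the ABMW \emph{sv}-classification of welded string links by non-repeated $\mu^{\mathrm{w}}$-invariants as the workhorse.

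For the necessity direction I would check invariance under each generating move. Self-crossing virtualization preserves $\mu^{\mathrm{w}}(I)$ for every non-repeated sequence $I$ by \cite{ABMW-Pisa}, so only the $2n$-moves require work. I would translate a $2n$-move between strands indexed $i$ and $j$ into arrow calculus and observe that the local contribution to each longitude is an $n$-fold repetition of one elementary packet of arrows; projecting to the reduced free group / nilpotent quotient defining $\mu^{\mathrm{w}}(I)$ then yields, for non-repeated $I$ of length $\geq 3$, a shift by a multiple of $n$. For length two, an explicit local calculation shows that the same $2n$-move alters $\mu^{\mathrm{w}}(ij)$ and $\mu^{\mathrm{w}}(ji)$ by the same signed $n$, so the difference $\mu^{\mathrm{w}}(ij)-\mu^{\mathrm{w}}(ji)$ is unchanged; all other pair invariants are unaffected.

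For sufficiency, assume the two hypotheses hold. The strategy is to build a welded string link $\tau$ as a product of explicit generators, so that $\tau\cdot\sigma$ has the same $\mu^{\mathrm{w}}(I)$ as $\sigma'$ for every non-repeated $I$ and such that $\tau$ is itself $(2n+\mathrm{sv})$-equivalent to the trivial string link; the ABMW classification then yields $\tau\cdot\sigma\sim_{\mathrm{sv}}\sigma'$, hence $\sigma\sim_{(2n+\mathrm{sv})}\sigma'$. The generators to construct are: for each non-repeated $I$ with $|I|\geq 3$, a welded string link $W_I$ that realizes $\mu^{\mathrm{w}}(I)=n$, vanishes on all other non-repeated $\mu^{\mathrm{w}}$-invariants, and is $(2n+\mathrm{sv})$-trivial; and, for each pair $i\neq j$, the welded string link $W_{ij}$ obtained from the trivial link by a single $2n$-move between strands $i$ and $j$, which is $(2n+\mathrm{sv})$-trivial by construction and shifts $\mu^{\mathrm{w}}(ij)$ and $\mu^{\mathrm{w}}(ji)$ in unison by $n$. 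Using the group structure on sv-equivalence classes and hypotheses (i)-(ii), I would then solve explicitly for the exponents of the generators that assemble into the required $\tau$.

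The main obstacle is the existence and $(2n+\mathrm{sv})$-triviality of the higher-length generators $W_I$. The natural candidates are $n$-th powers of the Brunnian-type realizers produced in \cite{ABMW-Pisa}, and the argument that these $n$-th powers collapse under a controlled sequence of $2n$-moves and self-crossing virtualizations should be adapted from the classical arrow-calculus reduction in \cite{MWY19}. The welded setting introduces genuine new subtleties, since arrows carry a head/tail asymmetry that classical crossings do not; one must verify that the reductions used classically still go through, with crossing virtualizations absorbing any spurious self-arrows that appear. This same asymmetry is precisely the reason why condition (ii) is an equality rather than a congruence modulo $n$: the $(2n+\mathrm{sv})$-toolkit can shift $\mu^{\mathrm{w}}(ij)$ and $\mu^{\mathrm{w}}(ji)$ only along the diagonal, and no move in the toolkit can break this coupling.
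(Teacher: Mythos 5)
Your proposal is correct and follows essentially the same route as the paper: your invariance checks are Proposition~\ref{prop-inv-2n} (the paper obtains the mod-$n$ statement by factoring $2n$-moves through $V^{n}$-moves and a Magnus-expansion computation, Theorem~\ref{th-inv-Vn}, and verifies directly the diagonal shift of $\mu^{\rm w}(ij)$ and $\mu^{\rm w}(ji)$ by $\e n$ under a $2n$-move), while your correcting-factor argument is a cosmetic repackaging of the paper's normal-form reduction via the {\rm w}-tree generators $W_{Ii}$ (Theorem~\ref{th-rep-SV} and Proposition~\ref{prop-rep-2nSV}). The ``main obstacle'' you identify is exactly Lemma~\ref{lem-del-wtree}, and the method you propose for it---adapting the arrow-calculus reduction of~\cite{MWY19} with self-crossing virtualizations absorbing repeated {\rm w}-trees---is precisely how the paper proves it (A-moves, ends exchange moves, inverse moves, deletion of repeated {\rm w}-trees).
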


\noindent
Here, the {\em $(2n+{\rm sv})$-equivalence} is the equivalence relation on welded string links generated by the $2n$-move, self-crossing virtualization and welded isotopy.

\begin{remark}
In~\cite[Proposition 3.8]{ABMW-MMJ}, 
Audoux, Bellingeri, Meilhan and Wagner proved Theorem~\ref{th-2nsv} for $n=1$. 
Note that $\mu^{{\rm w}}(ij)-\mu^{{\rm w}}(ji)$ is written as ${\rm vlk}_{ij}-{\rm vlk}_{ji}$ in~\cite{ABMW-MMJ}. 
\end{remark}

\begin{theorem}\label{th-Vnsv}
Let $n$ be a positive integer. 
Two welded string links $\sigma$ and $\sigma'$ are $(V^{n}+{\rm sv})$-equivalent if and only if 
$\mu^{{\rm w}}_{\sigma}(I)\equiv\mu^{{\rm w}}_{\sigma'}(I)\pmod{n}$ for any non-repeated sequence $I$. 
\end{theorem}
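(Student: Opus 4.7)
The plan is to follow the overall strategy used for Theorem~\ref{th-classical} in~\cite{MWY19}, adapting it to the welded setting with the classification of welded string links up to sv-equivalence from~\cite{ABMW-Pisa} in the background. For necessity, since the invariants $\mu^{{\rm w}}(I)$ on non-repeated sequences $I$ are already known to be sv-invariants, it suffices to check that a single $V^{n}$-move changes each such $\mu^{{\rm w}}(I)$ by an integer multiple of $n$. I would carry this out by an explicit local computation via arrow calculus: write both sides of the $V^{n}$-move as arrow presentations, read off the effect on the associated Magnus expansion, and observe that the local change consists of $n$ copies of a common elementary contribution. In particular, this should explain why the supplementary integral equality involving $\mu^{{\rm w}}(ij)-\mu^{{\rm w}}(ji)$ appearing in Theorem~\ref{th-2nsv} is no longer needed here: the $V^{n}$-move changes this difference by a multiple of $n$, whereas the $2n$-move leaves it fixed.

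For sufficiency, I would use the fact that the Milnor invariants $\mu^{{\rm w}}(I)$ on non-repeated sequences classify welded string links up to sv-equivalence~\cite{ABMW-Pisa}, together with the stacking group structure they induce on the sv-quotient. Letting $\tau$ be a representative of the sv-class of $\sigma\cdot(\sigma')^{-1}$, the hypothesis translates to $\mu^{{\rm w}}_{\tau}(I)\equiv 0\pmod{n}$ for every non-repeated $I$, and the goal becomes showing that any such $\tau$ is $(V^{n}+{\rm sv})$-trivial. After choosing a system of generators $g_{I}$ for the sv-quotient, each realizing a single nontrivial $\mu^{{\rm w}}$-value, the task reduces to proving that $g_{I}^{n}$ is $(V^{n}+{\rm sv})$-trivial for every admissible $I$. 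The general case then follows by induction on the maximal length of sequences where $\mu^{{\rm w}}_{\tau}$ is nonzero modulo $n$, using additivity of the first nontrivial Milnor invariants under stacking.

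The main obstacle is this last step: constructing, for each non-repeated sequence $I$, an explicit sequence of $V^{n}$-moves and sv-moves trivializing $g_{I}^{n}$. The strategy is to select $g_{I}$ with a short arrow presentation dominated by a single distinguished arrow labeled by $I$, so that $g_{I}^{n}$ presents $n$ parallel copies of this arrow which a single $V^{n}$-move should absorb, up to error terms of strictly lower complexity. Controlling these error terms and verifying that they lie within the scope of the inductive hypothesis is expected to be the technical heart of the proof, paralleling the analogous reduction in~\cite{MWY19} for the classical $2n$-move case.
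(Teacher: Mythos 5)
Your proposal is correct and follows essentially the same route as the paper: necessity is the paper's Theorem~\ref{th-inv-Vn} (a single $V^{n}$-move inserts an $n$-th power of a conjugate of a meridian, whose Magnus expansion is $1+\e nP(X_{i})+\mathcal{O}(2)$, so non-repeated $\mu^{{\rm w}}$ change only by multiples of $n$), and sufficiency is the paper's Proposition~\ref{prop-rep-VnSV}, which uses the {\rm sv}-classification of Theorem~\ref{th-rep-SV} (your generators $g_{I}$ are the $W_{Ii}$) together with the fact that $(W_{Ii})^{\pm n}$ is $(V^{n}+{\rm sv})$-trivial --- for $k=1$ by a single $V^{n}$-move, and for $k\geq2$ by Lemma~\ref{lem-del-wtree} via inverse moves, A-moves, ends exchange moves and deletion of repeated {\rm w}-trees, which are exactly your ``error terms of strictly lower complexity.'' The only cosmetic differences are that the paper carries out the invariance computation via Milnor's algorithm on Wirtinger presentations rather than arrow presentations, and reduces both $\sigma$ and $\sigma'$ to a common normal form with exponents reduced modulo $n$ instead of trivializing $\sigma*(\sigma')^{-1}$ in the quotient group.
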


\noindent
Here a {\em $V^{n}$-move}, defined by the authors in~\cite{MWY18}, is an oriented local move as illustrated in Figure~\ref{GV} which is a generalization of the crossing virtualization. 
(In fact, the $V^{1}$-move is equivalent to the crossing  virtualization.) 
The {\em $(V^{n}+{\rm sv})$-equivalence} is the equivalence relation on welded string links generated by the $V^{n}$-move, self-crossing virtualization and welded isotopy.

\begin{figure}[htbp]
  \begin{center}
    \begin{overpic}[width=11cm]{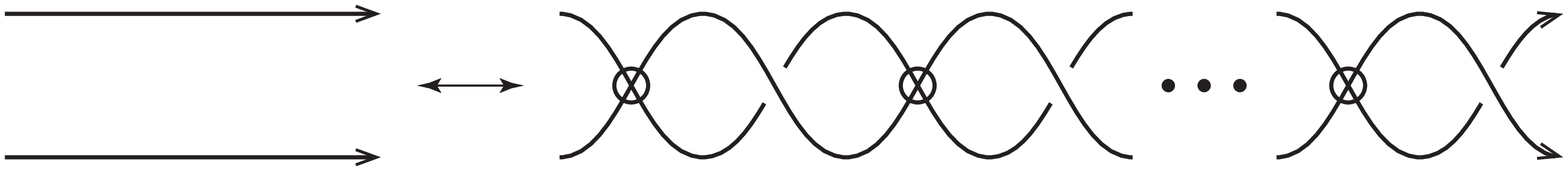}
      \put(153,2){{\footnotesize $1$}}
      \put(210,2){{\footnotesize $2$}}
      \put(296,2){{\footnotesize $n$}}
    \end{overpic}
  \end{center}
  \caption{$V^{n}$-move}
  \label{GV}
\end{figure}

Let ${\rm w}\mathcal{SL}(m)/(2n+\rm{sv})$ and ${\rm w}\mathcal{SL}(m)/(V^{n}+\rm{sv})$ denote the quotients of the set ${\rm w}\mathcal{SL}(m)$ of $m$-component welded string links under $(2n+\rm{sv})$-equivalence and $(V^{n}+\rm{sv})$-equivalence, respectively. 
Since the set of {\rm sv}-equivalence classes of ${\rm w}\mathcal{SL}(m)$ forms a group of rank~$w_{m}=\sum_{r=2}^{m}(r-2)!r\binom{m}{r}$~\cite[Remark 4.9]{AMW}, 
it is not hard to see that both  
${\rm w}\mathcal{SL}(m)/(2n+\rm{sv})$ and ${\rm w}\mathcal{SL}(m)/(V^{n}+\rm{sv})$ also form groups. 
In contrast to $\mathcal{SL}(m)/(2n+\rm{lh})$, the quotient 
${\rm w}\mathcal{SL}(m)/(2n+\rm{sv})$ is not a finite group. 
On the other hand, we have the following.

\begin{corollary}\label{cor-group} 
The quotient ${\rm w}\mathcal{SL}(m)/(V^{n}+\rm{sv})$ forms a finite group generated by elements of order~$n$, 
and the order of the group is $n^{w_{m}}$. 
\end{corollary}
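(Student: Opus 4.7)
The plan is to combine Theorem~\ref{th-Vnsv} with the known classification of ${\rm w}\mathcal{SL}(m)/{\rm sv}$ by non-repeated Milnor invariants.

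Fix a set $\mathcal{I}$ of $w_m$ non-repeated sequences whose $\mu^{\rm w}$-invariants form a $\mathbb{Z}$-basis of the relevant lattice: by~\cite{ABMW-Pisa,AMW}, the values $\bigl(\mu^{\rm w}_\sigma(I)\bigr)_{I\in\mathcal{I}}$ determine the ${\rm sv}$-class of $\sigma$, and every tuple in $\mathbb{Z}^{w_m}$ is realized. Since any non-repeated Milnor invariant is a $\mathbb{Z}$-linear combination (via shuffle-type relations) of the $\mu^{\rm w}(I)$ for $I\in\mathcal{I}$, Theorem~\ref{th-Vnsv} implies that the assignment
\[
 \Phi \colon {\rm w}\mathcal{SL}(m)/(V^n+{\rm sv}) \longrightarrow (\mathbb{Z}/n\mathbb{Z})^{w_m},
 \qquad [\sigma] \longmapsto \bigl(\mu^{\rm w}_\sigma(I)\bmod n\bigr)_{I\in\mathcal{I}},
\]
is a well-defined bijection. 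In particular ${\rm w}\mathcal{SL}(m)/(V^n+{\rm sv})$ is finite of order $n^{w_m}$.

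To show that the quotient is generated by elements of order $n$, for each $I\in\mathcal{I}$ choose a welded string link $\beta_I$ with $\mu^{\rm w}_{\beta_I}(J)=\delta_{IJ}$ for $J\in\mathcal{I}$, picked moreover so that all basis Milnor invariants of $\beta_I$ of length strictly less than $|I|$ vanish. The classes $[\beta_I]$ generate the quotient because their $\Phi$-images span $(\mathbb{Z}/n\mathbb{Z})^{w_m}$. By Theorem~\ref{th-Vnsv} it suffices to verify that $\mu^{\rm w}_{\beta_I^n}(J)\equiv 0 \pmod n$ for every non-repeated $J$. Iterating the stacking product formula for welded Milnor invariants yields
\[
  \mu^{\rm w}_{\beta_I^n}(J)=n\,\mu^{\rm w}_{\beta_I}(J)+R_J\bigl(\mu^{\rm w}_{\beta_I}(K):|K|<|J|\bigr),
\]
where $R_J$ is an integral polynomial whose monomials are weighted by binomial coefficients $\binom{n}{k}$, $k\geq 2$. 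An induction on $|J|$ then gives $\mu^{\rm w}_{\beta_I^n}(J)\equiv 0 \pmod n$.

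The main obstacle is verifying divisibility of the correction term $R_J$ by $n$ for composite $n$, where $\binom{n}{k}$ is not automatically a multiple of $n$. I would handle this exactly as in the classical analogue~\cite[Corollary~1.2]{MWY19}: the nilpotent filtration on ${\rm w}\mathcal{SL}(m)/{\rm sv}$ permits each $\beta_I$ to be chosen so that the products $\prod_i \mu^{\rm w}_{\beta_I}(K_i)$ appearing in $R_J$ either vanish (because some factor is a short basis invariant of $\beta_I$ that was arranged to be zero) or carry enough additional factors to compensate the deficiency in $\binom{n}{k}$. The inductive scheme from~\cite{MWY19} then transfers essentially verbatim to the welded setting.
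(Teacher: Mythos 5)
Your overall architecture is sound and genuinely different from the paper's: the paper deduces the corollary in one line from the normal form (Proposition~\ref{prop-rep-VnSV} together with Remark~\ref{rem-representative}) and from Lemma~\ref{lem-del-wtree}, where $(W_{Ii})^{\pm n}\sim\mathbf{1}_{m}$ is established \emph{geometrically} by {\rm w}-tree moves (and, for $k=1$, by a single $V^{n}$-move), whereas you try to rebuild everything algebraically from Theorem~\ref{th-Vnsv} and Milnor-invariant calculus. But as written your proof has two genuine gaps. First, ``the classes $[\beta_I]$ generate because their $\Phi$-images span $(\mathbb{Z}/n\mathbb{Z})^{w_m}$'' is not a valid inference: $\Phi$ is only a set bijection, the group is non-abelian for $m\geq 3$, and Milnor invariants are not additive under stacking (the deconcatenation corrections you yourself write down). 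Generation needs the standard triangularity induction on length — multiplying by $\beta_I^{\pm1}$ shifts the length-$(k{+}1)$ basis invariant $I$ without disturbing shorter ones, then conclude by injectivity of $\Phi$ — or simply the paper's normal form, which \emph{is} the generation statement. This is repairable but missing. Second, and more seriously, the divisibility of the correction term $R_J$ by $n$ for composite $n$ — which you correctly flag as the main obstacle — is exactly the part you do not prove. The appeal to \cite{MWY19} does not work as stated: the power-series techniques there (and their echo here, Claim in Section 3 and Proposition~\ref{prop-inv-prime}) handle binomial coefficients only for \emph{prime} $p$, which is precisely why Proposition~\ref{prop-inv-prime} is restricted to primes; the composite-$n$ statement in \cite[Corollary 1.2]{MWY19} rests on normal-form and geometric move arguments, the classical analogue of Lemma~\ref{lem-del-wtree}, so there is no binomial-divisibility ``inductive scheme'' to transfer verbatim.

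The gap is closable within your framework, but it requires a concrete choice you did not make: take $\beta_I=W_{Ii}$. Its $i$th preferred longitude is a single iterated commutator of length $k$ in the meridians $\alpha_{j_1},\ldots,\alpha_{j_k}$ (and similarly for the other longitudes), so $\mu^{\rm w}_{W_{Ii}}(J_s i)\neq 0$ forces the support of $J_s$ to be all of $\{j_1,\ldots,j_k\}$. In the deconcatenation expansion of $\mu^{\rm w}_{\beta_I^{\,n}}(J)$ for non-repeated $J$, any term with $r\geq 2$ factors would need two disjoint blocks both of full support — impossible — so $R_J$ vanishes identically and $\mu^{\rm w}_{\beta_I^{\,n}}(J)=n\,\mu^{\rm w}_{\beta_I}(J)$, with no $\binom{n}{k}$ issue at all. (One minor inaccuracy besides: non-repeated invariants are \emph{integer polynomial}, not $\mathbb{Z}$-linear, combinations of the basis ones, since shuffle relations for string links carry product terms; this does not harm injectivity of $\Phi$ modulo $n$, but the claim should be stated correctly.) Alternatively, note that $(W_{Ii})^{n}\sim_{(V^n+{\rm sv})}\mathbf{1}_{m}$ is already available from Lemma~\ref{lem-del-wtree} and Proposition~\ref{prop-2nVn}, which is how the paper concludes.
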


Theorems~\ref{th-2nsv} and~\ref{th-Vnsv} indicate that $(2n+{\rm sv})$-equivalence implies $(V^{n}+{\rm sv})$-equivalence (Proposition~\ref{prop-2nVn}). 
Moreover, these theorems together with Theorem~\ref{th-classical} imply 
that both natural maps $\iota_{1}:\mathcal{SL}(m)/(2n+{\rm lh})\rightarrow{\rm w}\mathcal{SL}(m)/(2n+{\rm sv})$ and 
$\iota_{2}:\mathcal{SL}(m)/(2n+{\rm lh})\rightarrow{\rm w}\mathcal{SL}(m)/(V^{n}+{\rm sv})$ are injective 
(Proposition~\ref{prop-injective}). 
Figure~\ref{summary} gives the summary of relations between $\mathcal{SL}(m)/(2n+{\rm lh})$, ${\rm w}\mathcal{SL}(m)/(2n+{\rm sv})$ and ${\rm w}\mathcal{SL}(m)/(2n+{\rm sv})$.

\begin{figure}[htbp]
  \begin{subfigmatrix}{1}{
    \[
    \xymatrix
    {
      \mathcal{SL}(m)/(2n+{\rm lh})
      \ar@{^{(}->}[r]^-{\iota_{1}}
      \ar@<-0.3ex>@{^{(}->}[rd]_-{\iota_{2}}
      &{\rm w}\mathcal{SL}(m)/(2n+{\rm sv})
      \ar@{->>}[d]^-{\text{Proposition~\ref{prop-2nVn}}}\\
      &{\rm w}\mathcal{SL}(m)/(V^{n}+{\rm sv})
    }
    \]
  }
  \end{subfigmatrix}
  \caption{Relations between $\mathcal{SL}(m)/(2n+{\rm lh})$, ${\rm w}\mathcal{SL}(m)/(2n+{\rm sv})$ and ${\rm w}\mathcal{SL}(m)/(2n+{\rm sv})$}
  \label{summary}
\end{figure}
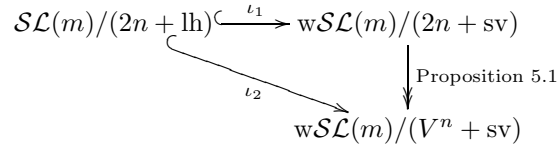

\section{Welded string links and welded Milnor invariants}
In this section, we review the definitions of welded string links and their Milnor invariants.

\subsection{Welded string links}
An {\em $m$-component virtual string link diagram} is an $m$-component string link diagram in the plane, whose transverse double points admit not only {\em classical crossings} but also {\em virtual crossings} illustrated in Figure~\ref{xing}. 
Throughout the paper, virtual string link diagrams are assumed to be ordered and oriented.

\begin{figure}[htbp]
  \begin{center}
    \begin{overpic}[width=4.5cm]{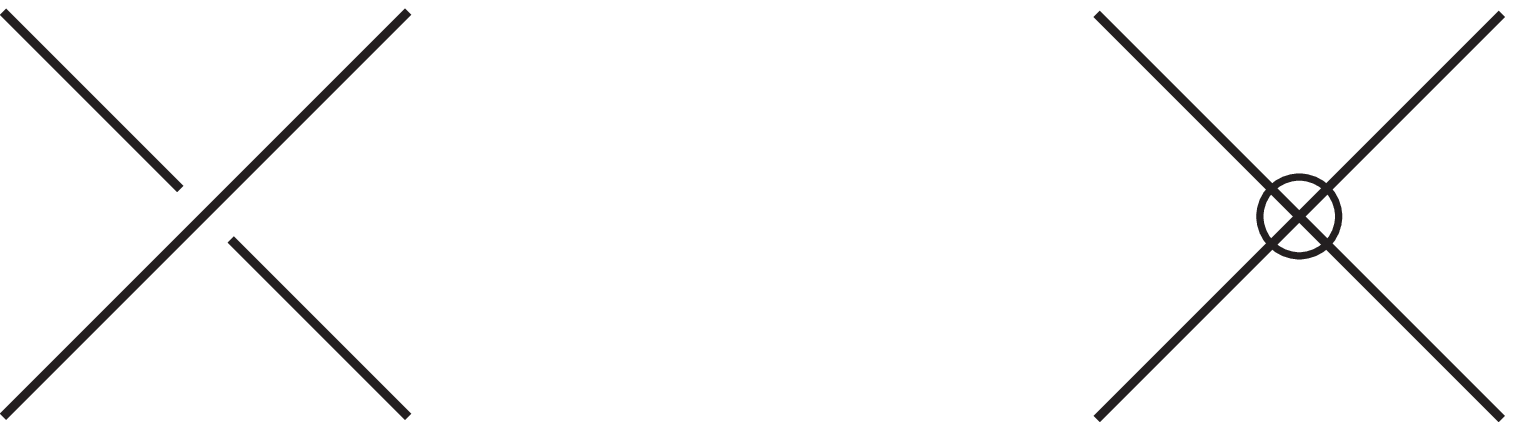}
      \put(-19,-15){classical crossing}
      \put(78,-15){virtual crossing}
    \end{overpic}
  \end{center}
  \vspace{1em}
  \caption{}
  \label{xing}
\end{figure}

A {\em welded string link} is an equivalence class of virtual string link diagrams under {\em welded Reidemeister moves}, which consist of Reidemeister moves R1--R3, virtual moves V1--V4 and the overcrossings commute move OC illustrated in Figure~\ref{wRmoves}. 
A sequence of welded Reidemeister moves is called a {\em welded isotopy}.

\begin{figure}[htbp]
  \begin{center}
    \begin{overpic}[width=12cm]{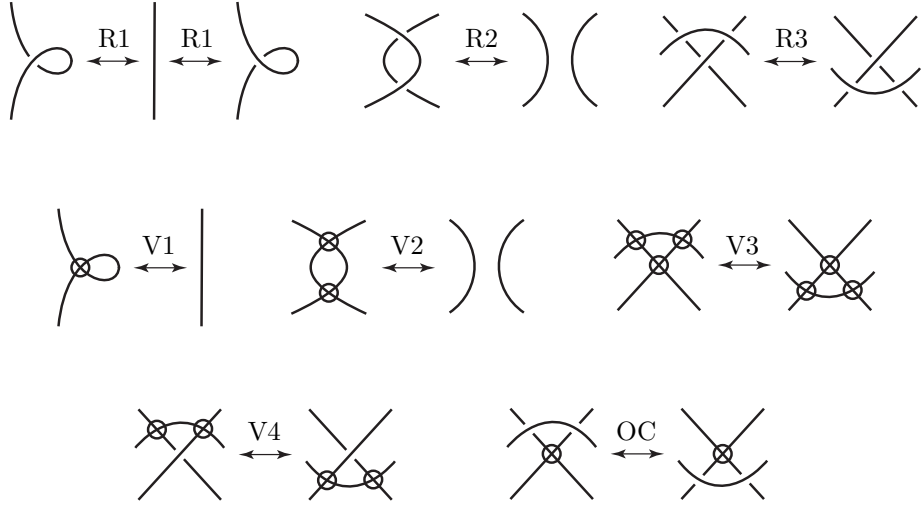}
      \put(33,171){R1}
      \put(64,171){R1}
      \put(170.5,171){R2}
      \put(286.5,171){R3}
      \put(49.5,93){V1}
      \put(142.5,93){V2} 
      \put(268,93){V3}
      \put(89,23){V4}
      \put(227,23){OC}
    \end{overpic}
  \end{center}
  \caption{Welded Reidemeister moves}
  \label{wRmoves}
\end{figure}

Here, we give the definition of the group of a welded string link. 
The {\em group $G(D)$} of a virtual string link diagram $D$ is defined via the Wirtinger presentation~\cite[Section 4]{Kauffman},  
i.e. an arc of $D$ yields a generator, and each classical crossing gives a relation of the form $c^{-1}b^{-1}ab$, 
where $a$ and $c$ correspond to the underpasses and $b$ corresponds to the overpass at the crossing; see Figure~\ref{relation}. 
(Here, an {\em arc} of $D$ is a piece of strand such that each boundary is either a strand endpoint or a classical undercrossing, and the interior does not contain classical undercrossings.) 
The group $G(D)$ is preserved under welded isotopy~\cite{Kauffman, Satoh}, 
and hence we define the {\em group $G(\sigma)$} of a welded string link $\sigma$ to be $G(D)$ of a virtual diagram $D$ of $\sigma$.

\begin{figure}[t]
  \begin{center}
    \begin{overpic}[width=1.5cm]{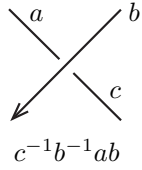}
      \put(8,38){$a$}
      \put(45,38){$b$}
      \put(38,9){$c$}
      \put(2,-15){$c^{-1}b^{-1}ab$}
    \end{overpic}
  \end{center}
  \vspace{1em}
  \caption{Wirtinger relation}
  \label{relation}
\end{figure}

\subsection{Welded Milnor invariants}\label{subsec-wMilnor}
In~\cite{ABMW-Pisa}, Audoux, Bellingeri, Meilhan and Wagner defined Milnor invariants for {\em ribbon $2$-dimensional string links}, i.e. properly embedded annuli in the $4$-ball bounding immersed $3$-balls with only ribbon singularities. 
They also defined a welded extension of Milnor invariants, which is an invariant of welded string links, via the {\em Tube map} (see~\cite{Yajima,Satoh}) sending welded string links to ribbon $2$-dimensional string links. 
These invariants are called {\em welded Milnor invariants}. 
The construction of welded Milnor invariants is topological, since it is defined via the Tube map. 
However, applying Milnor's algorithm given in~\cite{M57}, we can (define and) compute welded Milnor invariants by means of virtual diagrams as follows. 

Given an $m$-component welded string link $\sigma$, 
consider its virtual diagram $D_{1}\cup\cdots\cup D_{m}$. 
Put labels $a_{i1},a_{i2},\ldots,a_{ir(i)}$ in order on all arcs of the $i$th component $D_{i}$ 
while we go along orientation on $D_{i}$ from the initial arc, 
where $r(i)$ denotes the number of arcs of $D_{i}$ $(i=1,\ldots,m)$. 
We call the arc $a_{i1}$ of $D_{i}$ the {\em $i$th meridian}.  
The Wirtinger presentation of $G(\sigma)$ has the form 
\[
\left\langle a_{ij}\ (1\leq i\leq m,1\leq j\leq r(i))~\vline~a_{ij+1}^{-1}u_{ij}^{-1}a_{ij}u_{ij}\ (1\leq i\leq m,1\leq j\leq r(i)-1)\right\rangle, 
\]
where the $u_{ij}$ are generators or inverses of generators that depend on the signs of the classical crossings. 
Here we set 
\[
v_{ij}=u_{i1}u_{i2}\ldots u_{ij}. 
\]
We call the product $v_{ir(i)-1}$ an {\em $i$th longitude}. 
Furthermore, we obtain the {\em preferred longitude $l_{i}$} by multiplying $v_{ir(i-1)}$ by $a_{i1}^{s}$ on the left for some $s\in\mathbb{Z}$. 

Let $G(\sigma)_{q}$ denote the $q$th term of the lower central series of $G(\sigma)$, 
and let $\alpha_{i}$ denote the image of $a_{i1}$ in the quotient $G(\sigma)/G(\sigma)_{q}$. 
Since $G(\sigma)/G(\sigma)_{q}$ is generated by $\alpha_{1},\ldots,\alpha_{m}$ (see~\cite{C,ABMW-Pisa}), 
the $i$th preferred longitude $l_{i}$ is expressed modulo $G(\sigma)_{q}$ as a word in $\alpha_{1},\ldots,\alpha_{m}$ for each $i\in\{1,\ldots,m\}$. 
We denote this word by $\lambda_{i}$. 

Let $\langle\alpha_{1},\ldots,\alpha_{m}\rangle$ denote the free group on $\{\alpha_{1},\ldots,\alpha_{m}\}$, and 
let $\mathbb{Z}\langle\langle X_{1},\ldots,X_{m}\rangle\rangle$ denote the ring of formal power series in noncommutative variables $X_{1},\ldots,X_{m}$ with integer coefficients. 
The {\em Magnus expansion} is a homomorphism 
\[
E:\langle\alpha_{1},\ldots,\alpha_{m}\rangle
\longrightarrow \mathbb{Z}\langle\langle X_{1},\ldots,X_{m}\rangle\rangle
\]
defined by, for $1\leq i\leq m$, 
\[
E(\alpha_{i})=1+X_{i}, \ E(\alpha_{i}^{-1})=1-X_{i}+X_{i}^{2}-X_{i}^{3}+\cdots. 
\]

\begin{definition}
For a sequence $I=j_{1}j_{2}\ldots j_{k}i$ $(k<q)$ of elements in $\{1,\ldots,m\}$,  
the {\em welded Milnor invariant $\mu^{{\rm w}}_{\sigma}(I)$} of $\sigma$ is the coefficient of $X_{j_{1}}\cdots X_{j_{k}}$ in the Magnus expansion $E(\lambda_{i})$. 
\end{definition}

\begin{remark}[{\cite[Theorem 5.4]{ABMW-Pisa}}]\label{rem-weldedMilnor}
The $\mu^{{\rm w}}$-invariant is indeed a welded extension of the (classical) Milnor $\mu$-invariant in the sense that if $\sigma$ is a classical string link, then $\mu^{{\rm w}}_{\sigma}(I)=\mu_{\sigma}(I)$ for any sequence $I$. 
\end{remark}

To compute $\mu^{{\rm w}}_{\sigma}(I)$, we need to obtain the word $\lambda_{i}$ in $\alpha_{1},\ldots,\alpha_{m}$ concretely. 
In~\cite{M57}, Milnor introduced an algorithm to give $\lambda_{i}$ by using the Wirtinger presentation of $G(\sigma)$ and a sequence of homomorphisms $\eta_{q}$. 
(Although this algorithm was actually given for Milnor invariants of links, it can be applied to those of welded string links.)

Let $\overline{A}$ denote the free group on the Wirtinger generators $\{a_{ij}\}$, 
and let $A$ denote the free subgroup generated by $a_{11},a_{21},\ldots,a_{m1}$. 
A sequence of homomorphisms $\eta_{q}:\overline{A}\rightarrow A$ is defined inductively by 
\begin{eqnarray*}
\eta_{1}(a_{ij})=a_{i1},& \\
\eta_{q+1}(a_{i1})=a_{i1},& 
\eta_{q+1}(a_{ij+1})=\eta_{q}(v_{ij}^{-1}a_{i1}v_{ij}). 
\end{eqnarray*}
Let $\overline{A}_{q}$ denote the $q$th term of the lower central series of $\overline{A}$, and 
let $N$ denote the normal subgroup of $\overline{A}$ generated by the Wirtinger relations $\{a_{ij+1}^{-1}u_{ij}^{-1}a_{ij}u_{ij}\}$. 
Milnor proved that 
\[
\eta_{q}(a_{ij})\equiv a_{ij}\pmod{\overline{A}_{q}N}.  
\]
Hence, by the congruence above,
we can identify $\phi\circ\eta_{q}(l_{i})$ with $\lambda_{i}$, 
where $\phi:A\rightarrow\langle\alpha_{1},\ldots,\alpha_{m}\rangle$ is a homomorphism defined by $\phi(a_{i1})=\alpha_{i}$ $(i=1,\ldots,m)$.

\section{Welded Milnor invariants and $V^{n}$-moves} 
In this section, we discuss the invariance of welded Milnor invariants under $V^{n}$-moves. 
We start with the following theorem concerning $\mu^{{\rm w}}$-invariants for non-repeated sequences.

\begin{theorem}\label{th-inv-Vn}
Let $n$ be a positive integer. 
If two welded string links $\sigma$ and $\sigma'$ are $(V^{n}+{\rm sv})$-equivalent, then $\mu^{{\rm w}}_{\sigma}(I)\equiv\mu^{{\rm w}}_{\sigma'}(I)\pmod{n}$ for any non-repeated sequence~$I$.
\end{theorem}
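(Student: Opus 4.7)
The plan is to split the claim into separate invariance statements for each generator of $(V^{n}+{\rm sv})$-equivalence. Welded isotopy preserves $\mu^{{\rm w}}$ on the nose, and for self-crossing virtualization the sv-classification of Audoux--Bellingeri--Meilhan--Wagner cited in the Introduction gives the stronger equality $\mu^{{\rm w}}_{\sigma}(I) = \mu^{{\rm w}}_{\sigma'}(I)$ for non-repeated $I$, so no mod-$n$ reduction is needed there. The entire content of the theorem therefore reduces to showing that a single $V^{n}$-move changes $\mu^{{\rm w}}(I)$ by a multiple of $n$.

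To do this, I would fix virtual diagrams $D$ and $D'$ of $\sigma$ and $\sigma'$ coinciding outside a small disk $B$ containing the move, and run Milnor's algorithm of Section~\ref{subsec-wMilnor} simultaneously on both. With compatible choices of arc labels and meridians, the Wirtinger presentations of $G(\sigma)$ and $G(\sigma')$ share all generators and relations outside $B$; in particular one can take the same meridians $a_{i1}$ and the same homomorphism $\phi:A\to\langle\alpha_{1},\ldots,\alpha_{m}\rangle$ for both diagrams. Inside $B$, the $V^{n}$-move removes $n$ classical crossings involving the overstrand, and a direct inspection of the local tangle shows that the effect on the preferred longitudes is to insert factors of the form $\alpha^{\pm n}$ (up to conjugation by words in the meridians $\alpha_{1},\ldots,\alpha_{m}$), where $\alpha$ is the overstrand meridian. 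Pushing these insertions through $\eta_{q}$ and then through the Magnus expansion $E$ converts the change to an element of $\mathbb{Z}\langle\langle X_{1},\ldots,X_{m}\rangle\rangle$.

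The heart of the argument, and the main obstacle, is the Magnus computation: one must show that every non-repeated monomial in $E(\lambda_{i})-E(\lambda_{i}')$ has coefficient divisible by $n$. The key algebraic input is the identity $E(\alpha^{n}) = (1+X)^{n}$, all of whose non-leading coefficients are divisible by $n$. Since a non-repeated sequence requires each variable to appear at most once, the relevant monomials pick up exactly one factor from such an $\alpha^{n}$-insertion, and conjugation by arbitrary meridian-words does not destroy the divisibility. The delicate point is bookkeeping: a single $V^{n}$-move affects several preferred longitudes simultaneously, and one must verify that the cumulative change, after the $\eta_{q}$-iteration, still decomposes as a product of conjugates of $n$th powers on non-repeated coefficients. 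I expect this is organized most cleanly in the arrow-calculus framework from~\cite{MY} alluded to in the keywords, with the overall structure paralleling the classical argument of~\cite{MWY19}.
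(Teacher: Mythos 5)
Your outline does track the paper's own strategy --- reduce to a single $V^{n}$-move via the sv-classification, run Milnor's algorithm on parallel diagrams $D$ and $D'$, and kill inserted conjugates of $n$-th powers of meridians in the Magnus expansion --- but it has one step that is false as stated and one deferred step that is the actual heart of the proof. The false step is your ``key algebraic input'': the non-leading coefficients of $E(\alpha^{n})=(1+X)^{n}=\sum_{k}\binom{n}{k}X^{k}$ are \emph{not} all divisible by $n$ unless $n$ is prime (e.g.\ $\binom{4}{2}=6\not\equiv 0 \pmod{4}$). If that claim were true it would yield invariance mod $n$ for arbitrary, possibly repeated, sequences, which fails for composite $n$; this is precisely why the paper's Proposition~\ref{prop-inv-prime} for repeated sequences is restricted to prime $p$. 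What actually saves the non-repeated case is weaker, and is the content of the paper's Claim~\ref{claim-Vn}~(1): the inserted element is a conjugate, $\phi'\circ\eta'_{q}({a'}_{ij})=w^{-1}\alpha_{i}w$, so $E(w^{-1}\alpha_{i}w)=1+P(X_{i})+\mathcal{O}(2)$ where every term of $P(X_{i})$ contains $X_{i}$, and hence $\left(1+P(X_{i})+\mathcal{O}(2)\right)^{n}=1+nP(X_{i})+\mathcal{O}(2)$ because all cross terms repeat $X_{i}$ and fall into $\mathcal{O}(2)$; only the linear binomial coefficient $\binom{n}{1}=n$ ever matters. Your remark that non-repeated monomials ``pick up exactly one factor'' points at this, but the argument must be run through the $\mathcal{O}(2)$ calculus explicitly rather than resting on the false divisibility statement.

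The deferred step is the bookkeeping you hope to outsource to arrow calculus: the paper does not use arrow calculus here at all (it appears only in the realization half, Section~\ref{sec-proofs}), and the propagation problem is resolved instead by an induction on $q$ (Claim~\ref{claim-Vn}~(2)). The point is that the $V^{n}$-move inserts ${a'}_{kl}^{n}$ directly only into the longitude of the understrand component, but every word $v_{ij}$ passing through the disk changes, and these changes feed through the recursion $\eta_{q+1}(a_{ij+1})=\eta_{q}(v_{ij}^{-1}a_{i1}v_{ij})$, so one must prove $E(\phi\circ\eta_{q}(a_{ij}))\overset{(n)}{\equiv}E(\phi'\circ\eta'_{q}({a'}_{ij}))+\mathcal{O}(2)$ for \emph{all} generators by induction; without this your claim that the cumulative change ``decomposes as a product of conjugates of $n$-th powers'' is unproved. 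Finally, you never treat a $V^{n}$-move whose two strands lie on a single component: the paper disposes of this case first, observing it is realized by sv-equivalence, and this reduction is also what guarantees the framing exponents agree ($s=t$ in $l_{1}=a_{11}^{s}v_{1r(1)-1}$ versus $l'_{1}={a'}_{11}^{t}v'_{1r(1)-1}$); without it, the difference of preferred longitudes is not merely an insertion of a conjugate of $\alpha^{\pm n}$.
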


\begin{proof}
It is obvious for $n=1$, and hence we consider the case $n\geq2$. 
Since $\mu^{{\rm w}}$-invariants for non-repeated sequences are {\rm sv}-equivalence invariants, 
we show that their residue classes modulo $n$ are preserved under $V^{n}$-moves. 

Let $D$ and $D'$ be virtual diagrams of $m$-component welded string links $\sigma$ and $\sigma'$, respectively. 
Assume that $D$ and $D'$ are related by a single $V^{n}$-move in a disk~$\Delta$; see Figure~\ref{Vn-move}.  
Since a $V^{n}$-move involving two strands of a single component is realized by {\rm sv}-equivalence,  
we may assume that two strands in the disk $\Delta$ belong to different components. 
Put labels $a_{ij}$ $(1\leq i\leq m$, $1\leq j\leq r(i))$ on all arcs of $D$ as described in Section~\ref{subsec-wMilnor}, 
and put labels $a'_{ij}$ on all arcs in $D'\setminus\Delta$ 
which correspond to the arcs labeled $a_{ij}$ in $D\setminus\Delta$. 
Also put labels $b'_{1},\ldots,b'_{n}$ on the arcs of $D'$ in $\Delta$ as illustrated in Figure~\ref{Vn-move}. 
Let $\overline{A'}$ be the free group on $\{a'_{ij}\}\cup\{b'_{1},\ldots,b'_{n}\}$ 
and $A'$ the free subgroup on $\{a'_{11},a'_{21},\ldots,a'_{m1}\}$. 
Let $\eta'_{q}:\overline{A'}\rightarrow A'$ denote the sequence of homomorphisms associated with $D'$ given in Section~\ref{subsec-wMilnor}, 
and define a homomorphism $\phi':A'\rightarrow\langle\alpha_{1},\ldots,\alpha_{m}\rangle$ by $\phi'(a'_{i1})=\alpha_{i}$ $(i=1,\ldots,m)$.

\begin{figure}[htbp]
  \begin{center}
    \vspace{1em}
    \begin{overpic}[width=11cm]{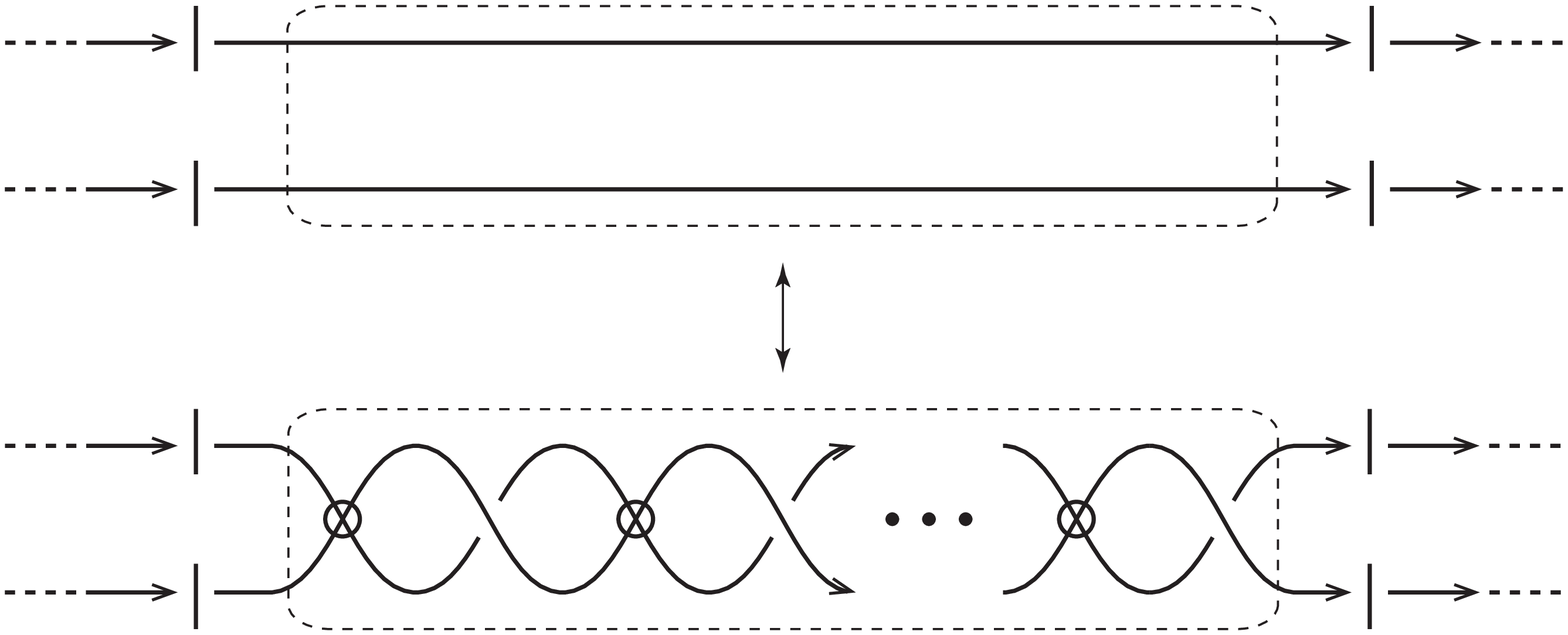}
      \put(-20,100){$D$ :}
      \put(-20,20){$D'$ :}
      \put(162,59){$V^{n}$-move}
      \put(152,129){$\Delta$}
      \put(152,-10){$\Delta$}
      \put(11,124){$a_{gh-1}$}
      \put(11,79){$a_{kl-1}$}
      \put(43,124){$a_{gh}$}
      \put(43,79){$a_{kl}$}
      \put(277,124){$a_{gh+1}$}
      \put(277,79){$a_{kl+1}$}
      \put(11,44){$a'_{gh-1}$}
      \put(11,-3){$a'_{kl-1}$}
      \put(43,44){$a'_{gh}$}
      \put(43,-3){$a'_{kl}$}
      \put(277,44){$a'_{gh+1}$}
      \put(277,-3){$a'_{kl+1}$}
      \put(94.5,34){$b'_{1}$}
      \put(152,34){$b'_{2}$}
      \put(239,34){$b'_{n}$}
    \end{overpic}
  \end{center}
  \caption{$D$ and $D'$ are related by a single $V^{n}$-move.}
  \label{Vn-move}
\end{figure}

Here, for $P,Q\in\mathbb{Z}\langle\langle X_{1},\cdots,X_{m}\rangle\rangle$, we use the notation $P\overset{(n)}{\equiv}Q$ if $P-Q$ is contained in the ideal generated by $n$. 
For the $i$th preferred longitudes $l_{i}$ and $l'_{i}$ associated with $D$ and $D'$, respectively, it is enough to show that 
\begin{equation}\label{eq-Vn}
E\left(\phi\circ\eta_{q}\left(l_{i}\right)\right)
\overset{(n)}{\equiv}E\left(\phi'\circ\eta'_{q}\left(l'_{i}\right)\right)+\mathcal{O}(2) 
\end{equation}
for any $1\leq i\leq m$, 
where $\mathcal{O}(2)$ denotes $0$ or the terms containing $X_{r}$ at least two for some $r$ $(=1,\ldots,m)$.  
Without loss of generality we may assume that $i=1$, i.e. 
we compare $l_{1}=a_{11}^{s}v_{1r(1)-1}$ and $l'_{1}={a'}^{t}_{11}v'_{1r(1)-1}$ $(s,t\in\mathbb{Z})$. 
Recall that two strands in $\Delta$ belong to different components. 
This implies that $s=t$. 

If $g\neq1$ in Figure~\ref{Vn-move}, then $l'_{1}$ is obtained from $l_{1}$ by replacing $u_{1j}$ with $u'_{1j}$ $(j=1,\ldots,r(1)-1)$ and $a_{11}$ with $a'_{11}$. 

If $g=1$ in Figure~\ref{Vn-move}, then  
$l_{1}$ and $l'_{1}$ can be written respectively in the forms 
\[
l_{1}=a_{11}^{s}u_{11}\ldots u_{1h-1}u_{1h}\ldots u_{1r(1)-1}
\]
and
\[
l'_{1}={a'}^{s}_{11}u'_{11}\ldots u'_{1h-1}{a'}^{n}_{kl}u'_{1h}\ldots u'_{1r(1)-1}.
\]
Therefore, in both cases, 
Congruence~(\ref{eq-Vn}) follows from the claim below. 
\end{proof}

\begin{claim}\label{claim-Vn}
Let $n\geq2$ be an integer and $\e\in\{1,-1\}$. 
For any $1\leq i\leq m$ and $1\leq j\leq r(i)$, 
the following $(1)$ and $(2)$ hold: 
\begin{enumerate}
\item 
$E\left(\phi'\circ\eta'_{q}\left({a'}^{\e n}_{ij}\right)\right)
\overset{(n)}{\equiv}
1+\mathcal{O}(2)$. 

\item 
$E\left(\phi\circ\eta_{q}\left(a_{ij}\right)\right)
\overset{(n)}{\equiv}
E\left(\phi'\circ\eta'_{q}\left({a'}_{ij}\right)\right)+\mathcal{O}(2)$. 
\end{enumerate}
\end{claim}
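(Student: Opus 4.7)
The plan is to establish part~(1) first and then use it as the main tool for part~(2).

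For part~(1): iterating the recursion $\eta'_{q+1}({a'}_{ij+1}) = \eta'_q({v'}_{ij}^{-1}{a'}_{i1}{v'}_{ij})$ shows that $\eta'_q({a'}_{ij})$ is a conjugate of ${a'}_{i1}$ in the free group $A'$, so $\phi' \circ \eta'_q({a'}^{\e n}_{ij}) = w^{-1}\alpha_i^{\e n}w$ for some word $w$ in $\alpha_1,\ldots,\alpha_m$. Its Magnus expansion is $E(w)^{-1}(1+X_i)^{\e n}E(w)$. A direct binomial expansion gives $(1+X_i)^{\e n} = 1 + \e n X_i + \Phi(X_i)$, where $\Phi(X_i)$ is a power series in $X_i$ alone with all monomials of degree at least~$2$, hence in $\mathcal{O}(2)$. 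Both the ideal $(n) \subseteq \mathbb{Z}\langle\langle X_1,\ldots,X_m\rangle\rangle$ and $\mathcal{O}(2)$ are stable under left- and right-multiplication---the first by its ideal property, the second because multiplying a monomial containing a repeated letter can only add letters, never remove the repetition. Conjugation by $E(w)$ therefore sends the expansion to $1 + (\text{element of }(n)) + (\text{element of }\mathcal{O}(2))$, which proves~(1).

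For part~(2): argue by induction on $q$, the inductive claim being that (2) holds simultaneously for all pairs $(i, j)$ at the given value of~$q$. The base case $q=1$ is immediate since $\eta_1(a_{ij}) = a_{i1}$ and $\eta'_1({a'}_{ij}) = {a'}_{i1}$, so both sides equal $1+X_i$. For the inductive step, expand $\eta_q(a_{ij+1}) = \eta_{q-1}(v_{ij}^{-1}a_{i1}v_{ij})$ with $v_{ij} = u_{i1}\cdots u_{ij}$ and compare factor by factor with the primed version. The overpass letters $u_{ik}$ corresponding to crossings of the $i$-th strand outside $\Delta$ match the $u'_{ik}$ under the bijection of arcs on $D \setminus \Delta$, and the induction hypothesis applied to each such letter gives the required congruence. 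For crossings of the $i$-th strand inside $\Delta$ (which occur only on the primed side, and only when $i$ is the underpass strand), the overpass is always the single arc ${a'}_{gh}$, and the product of the $n$ corresponding letters in $v'_{ij}$ is an extra factor of ${a'}^{\pm n}_{gh}$ absent in $v_{ij}$. By part~(1), the Magnus expansion of $\phi' \circ \eta'_{q-1}({a'}^{\pm n}_{gh})$ is $\overset{(n)}{\equiv} 1 + \mathcal{O}(2)$, so this extra factor is absorbed. Using that $(n) + \mathcal{O}(2)$ is preserved by multiplication and by inversion of power series with constant term~$1$, one deduces $E(\phi \circ \eta_{q-1}(v_{ij})) \overset{(n)}{\equiv} E(\phi' \circ \eta'_{q-1}(v'_{ij})) + \mathcal{O}(2)$, and the conjugation formula then yields~(2) for $a_{ij+1}$. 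The auxiliary arcs $b'_1,\ldots,b'_n$ inside $\Delta$ are handled in exactly the same way: each $b'_r$ is a conjugate of ${a'}_{kl}$ by a power of ${a'}_{gh}$, and part~(1) absorbs the discrepancy with ${a'}_{kl}$.

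The main technical hurdle is the simultaneous bookkeeping across the recursion. The key insight---that an $n$-th power of a conjugate of a meridian has Magnus expansion $\equiv 1$ modulo $(n) + \mathcal{O}(2)$---is straightforward, but it has to be invoked at every depth of the $\eta$-recursion, and one must verify that each operation encountered (multiplication of power series, inversion of unit power series, and the successive substitutions defining $\eta_q$) preserves the ideal $(n) + \mathcal{O}(2)$. The sign of the extra factor ${a'}^{\pm n}_{gh}$ is determined by the sign convention for the $V^n$-move crossings, but since part~(1) is stated uniformly for $\e \in \{1,-1\}$, this sign ambiguity causes no genuine difficulty.
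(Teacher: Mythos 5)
Your proof is correct and follows essentially the same route as the paper: part~(1) is the paper's computation of the Magnus expansion of a conjugate of a meridian power (you expand $(1+X_i)^{\e n}$ binomially first and then conjugate by $E(w)$, while the paper raises $E\left(w^{-1}\alpha_i^{\e}w\right)=1+\e P(X_i)+\mathcal{O}(2)$ to the $n$-th power, which is the same calculation), and part~(2) is the paper's induction on $q$ through the recursion $\eta_{q+1}(a_{ij+1})=\eta_{q}\left(v_{ij}^{-1}a_{i1}v_{ij}\right)$, splitting into the case where $v'_{ij}$ is just a letterwise relabeling of $v_{ij}$ and the case where an extra factor ${a'}^{\pm n}$ is inserted, which part~(1) absorbs. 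One small caution: your closing assertion that part~(1) absorbs the discrepancy for \emph{every} arc $b'_r$ overstates matters, since for $0<r<n$ the conjugating word is only a partial power ${a'}^{\pm r}$, which part~(1) does not kill modulo $n$---but this is harmless, because the intermediate arcs $b'_1,\ldots,b'_{n-1}$ lie between consecutive undercrossings inside $\Delta$ and never serve as overpasses, so they never appear in any word $u'$, $v'$ or $l'$, and only the last arc, whose discrepancy is conjugation by the full $n$-th power, ever needs to be compared.
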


\begin{proof}
By the definition of $\eta'_{q}$, 
it follows that 
$\phi'\circ\eta'_{q}\left({a'}_{ij}^{\e}\right)=w^{-1}\alpha_{i}^{\e}w$ for some word $w$ in $\alpha_{1},\ldots,\alpha_{m}$. 
Set $E(w)=1+W$ and $E(w^{-1})=1+\overline{W}$, 
where $W$ and $\overline{W}$ denote the terms of degree $\geq1$ such that $\left(1+\overline{W}\right)\left(1+W\right)=1$. 
Then we have  
\begin{eqnarray*}
E\left(\phi'\circ\eta'_{q}\left({a'}_{ij}^{\e}\right)\right) 
&=&E\left(w^{-1}\alpha_{i}^{\e}w\right) \\ 
&=&\left(1+\overline{W}\right)\left(1+\e X_{i}\right)\left(1+W\right)+\mathcal{O}(2) \\
&=&1+\e X_{i}+\e X_{i}W+\e\overline{W}X_{i}+\e\overline{W}X_{i}W+\mathcal{O}(2) \\
&=&1+\e P(X_{i})+\mathcal{O}(2), 
\end{eqnarray*}
where $P(X_{i})=X_{i}+X_{i}W+\overline{W}X_{i}+\overline{W}X_{i}W$. 
Note that each term in $P(X_{i})$ contains $X_{i}$.
Therefore, it follows that 
\[
E\left(\phi'\circ\eta'_{q}\left({a'}^{\e n}_{ij}\right)\right)
=\left(1+\e P(X_{i})+\mathcal{O}(2)\right)^{n}
=1+\e nP(X_{i})+\mathcal{O}(2)
\overset{(n)}{\equiv}1+\mathcal{O}(2). 
\]
This completes the proof of Claim~\ref{claim-Vn}~(1).

The proof of Claim~\ref{claim-Vn}~(2) is done by induction on $q$. 
The assertion certainly holds for $q=1$. 
Recall that 
\[
\phi\circ\eta_{q+1}\left(a_{ij+1}\right)=\phi\circ\eta_{q}\left(v_{ij}^{-1}a_{i1}v_{ij}\right)\]
and
\[\phi'\circ\eta'_{q+1}\left(a'_{ij+1}\right)=\phi'\circ\eta'_{q}\left({v'_{ij}}^{-1}a'_{i1}v'_{ij}\right).
\]

If $v_{ij}$ does not pass through $\Delta$ or $g\neq i$ in Figure~\ref{Vn-move}, 
then $v'_{ij}$ is obtained from $v_{ij}$ by replacing $a_{ij}$ with $a'_{ij}$.   
Hence, $E\left(\phi\circ\eta_{q}\left(v_{ij}\right)\right)\overset{(n)}{\equiv}E\left(\phi'\circ\eta'_{q}\left(v'_{ij}\right)\right)+\mathcal{O}(2)$ by the induction hypothesis. 
This implies that  
\begin{eqnarray*}
E\left(\phi\circ\eta_{q+1}\left(a_{ij+1}\right)\right)
&=&E\left(\phi\circ\eta_{q}\left(v_{ij}^{-1}a_{i1}v_{ij}\right)\right) \\
&\overset{(n)}{\equiv}&E\left(\phi'\circ\eta'_{q}\left({v'_{ij}}^{-1}a'_{i1}v'_{ij}\right)\right)+\mathcal{O}(2) \\
&=&E\left(\phi'\circ\eta'_{q+1}\left(a'_{ij+1}\right)\right)+\mathcal{O}(2). 
\end{eqnarray*}

If $v_{ij}$ passes through $\Delta$ and $g=i$ in Figure~\ref{Vn-move}, 
then $v_{ij}$ and $v'_{ij}$ can be written respectively in the forms 
\[
v_{ij}=u_{i1}\ldots u_{ih-1}u_{ih}\ldots u_{ij}
\]
and
\[ 
v'_{ij}=u'_{i1}\ldots u'_{ih-1}{a'}_{kl}^{n}u'_{ih}\ldots u'_{ij}.
\] 
By Claim~\ref{claim-Vn}~(1) and the induction hypothesis, 
it follows that $E\left(\phi\circ\eta_{q}\left(v_{ij}\right)\right)\overset{(n)}{\equiv}E\left(\phi'\circ\eta'_{q}\left(v'_{ij}\right)\right)+\mathcal{O}(2)$. 
This completes the proof of Claim~\ref{claim-Vn}~(2). 
\end{proof}

\begin{proposition}\label{prop-inv-2n}
Let $n$ be a positive integer. 
If two $m$-component welded string links $\sigma$ and $\sigma'$ are $(2n+{\rm sv})$-equivalent, then 
$\mu^{{\rm w}}_{\sigma}(I)\equiv\mu^{{\rm w}}_{\sigma'}(I)\pmod{n}$ for any non-repeated sequence $I$, and 
$\mu^{{\rm w}}_{\sigma}(ij)-\mu^{{\rm w}}_{\sigma}(ji)=\mu^{{\rm w}}_{\sigma'}(ij)-\mu^{{\rm w}}_{\sigma'}(ji)$ for any $1\leq i<j\leq m$. 
\end{proposition}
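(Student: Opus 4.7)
The plan is to mirror the proof of Theorem~\ref{th-inv-Vn} for the first assertion and to supply a direct crossing count for the second. Since $\mu^{{\rm w}}$-invariants for non-repeated sequences are sv-equivalence invariants, and the length-two invariants $\mu^{{\rm w}}(ij)$ with $i\neq j$ are a special case of these, it suffices to check both conclusions under a single $2n$-move. If the two strands involved in the $2n$-move lie in the same component, each of the $2n$ newly introduced classical crossings is a self-crossing, so a self-crossing virtualization turns them into virtual crossings and virtual Reidemeister moves eliminate them; the move is therefore realized by sv-equivalence. Hence we may assume that the two strands belong to distinct components $g$ and $k$.

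For the second assertion, a direct count suffices. A $2n$-move inserts or removes a tangle of type $\sigma_{1}^{\pm 2n}$ between the $g$- and $k$-strands, consisting of $n$ same-sign classical crossings at which $g$ overcrosses $k$ and $n$ same-sign classical crossings at which $k$ overcrosses $g$. Since $\mu^{{\rm w}}(gk)$ is the signed count of $g$-over-$k$ crossings along the $k$-th longitude and $\mu^{{\rm w}}(kg)$ the symmetric count for the $g$-th longitude, both change by the same integer $\pm n$, and their difference is preserved exactly; index pairs $(i,j)$ with $\{i,j\}\neq\{g,k\}$ are unaffected.

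For the first assertion, I parallel the proof of Theorem~\ref{th-inv-Vn}. Let $D$ and $D'$ be virtual diagrams of $\sigma$ and $\sigma'$ differing by a single $2n$-move in a disk $\Delta$; label the arcs of $D$ as $a_{ij}$ and those of $D'$ as $a'_{ij}$, and add labels for the new sub-arcs inside $\Delta$ in $D'$. The preferred longitudes $l'_{i}$ of $D'$ differ from $l_{i}$ by two kinds of change: the insertion of $n$ conjugated-meridian factors in both the $g$- and $k$-longitudes as they traverse $\Delta$, and the resulting conjugation of post-$\Delta$ arc labels on the $g$- and $k$-strands. One then proves by induction on $q$ an analogue of Claim~\ref{claim-Vn}(2), namely $E(\phi\circ\eta_{q}(a_{ij}))\overset{(n)}{\equiv}E(\phi'\circ\eta'_{q}(a'_{ij}))+\mathcal{O}(2)$, which yields $E(\phi\circ\eta_{q}(l_{i}))\overset{(n)}{\equiv}E(\phi'\circ\eta'_{q}(l'_{i}))+\mathcal{O}(2)$ and hence the desired mod-$n$ equality on every non-repeated coefficient.

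The main obstacle is the base of this induction, the counterpart of Claim~\ref{claim-Vn}(1). In the $V^{n}$-move case the inserted factor was a literal $n$-th power $(a'_{kl})^{n}$ of a single Wirtinger generator and collapses to $1+\mathcal{O}(2)$ modulo $n$. For a $2n$-move the corresponding insertion is instead a product of $n$ conjugates of meridians whose conjugators involve the evolving labels of both strands, and in isolation it is not congruent to $1+\mathcal{O}(2)$ modulo $n$. The resolution is to pair this local insertion with the compensating conjugation of post-$\Delta$ labels on both strands and to verify that, once both sources are aggregated, the corresponding contributions satisfy the desired congruence.
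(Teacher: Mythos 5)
Your treatment of the second assertion (a direct signed count of the $2n$ new crossings, showing $\mu^{\rm w}(ij)$ and $\mu^{\rm w}(ji)$ both shift by the same $\e n$ when $\{i,j\}$ are the components involved) is exactly the paper's argument, and your reduction to a single $2n$-move between two distinct components is fine. The gap is in the first assertion. The paper never redoes the Magnus-expansion induction for $2n$-moves: it observes that $(2n+{\rm sv})$-equivalence implies $(V^{n}+{\rm sv})$-equivalence (Proposition~\ref{prop-2nVn}, proved by arrow calculus) and then simply quotes Theorem~\ref{th-inv-Vn}. You instead attempt a direct analogue of the proof of Theorem~\ref{th-inv-Vn}, and the one step you leave unverified --- the counterpart of Claim~\ref{claim-Vn}~(1) --- is precisely the crux of that route. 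Your proposed ``resolution'' (aggregating the insertion with the compensating conjugation of post-$\Delta$ labels) is an assertion, not an argument, and as stated it does not identify what actually goes wrong or how to fix it.

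Concretely, if $a$ and $b$ denote the labels of the two strands entering the disk (components $g$ and $k$), the word inserted into the $g$-th longitude is $b_{1}b_{2}\cdots b_{n}$, where the $b_{s}$ are the successive $k$-arc labels inside $\Delta$; the saving fact is the telescoping identity $b_{1}b_{2}\cdots b_{n}=a^{-n}(ba)^{n}$, so the insertion is a product of two $n$-th powers. Even granting this, the literal analogue of Congruence~(\ref{eq-Vn}) is \emph{false} for $2n$-moves: $E\left((ba)^{n}\right)$ contains cross terms such as $\binom{n}{2}X_{k}X_{g}$, which involve each of $X_{g},X_{k}$ only once (hence are not $\mathcal{O}(2)$) and are not divisible by $n$ when $n$ is even. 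One must weaken the target congruence so that, in the $i$-th longitude, terms containing $X_{i}$ are also discarded (legitimate, since a non-repeated sequence $Ii$ forces $i\notin I$), and then verify that all discrepancy terms carry both $X_{g}$ and $X_{k}$, so that when they propagate into other longitudes through sandwiches of the form $\overline{V}X_{i}V$ they acquire a repeated variable and fall into $\mathcal{O}(2)$. Neither the power identity nor the weakened congruence appears in your sketch, so the induction you describe cannot close as written. The simplest repair is to abandon the direct computation and argue as the paper does, via Proposition~\ref{prop-2nVn} and Theorem~\ref{th-inv-Vn}.
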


\begin{proof}
As mentioned in Section~\ref{sec-intro}, $(2n+{\rm sv})$-equivalence implies $(V^{n}+{\rm sv})$-equivalence (Proposition~\ref{prop-2nVn}). 
This together with Theorem~\ref{th-inv-Vn} implies that the residue class of $\mu^{{\rm w}}(I)$ modulo~$n$ is preserved under $(2n+{\rm sv})$-equivalence.

By a single $2n$-move involving two strands of the $k$th and the $l$th components, 
both of the changes of $\mu^{{\rm w}}(ij)$ and $\mu^{{\rm w}}(ji)$ are $\e n$ $(\e\in\{1,-1\})$ if $\{k,l\}=\{i,j\}$ and $0$ otherwise. 
Furthermore, since $\mu^{{\rm w}}(ij)$ and $\mu^{{\rm w}}(ji)$ are {\rm sv}-equivalence invariants, 
the integer $\mu^{{\rm w}}(ij)-\mu^{{\rm w}}(ji)$ is preserved under $(2n+{\rm sv})$-equivalence. 
This completes the proof.
\end{proof}

For $\mu^{{\rm w}}$-invariants possibly with {\em repeated} sequences, we have the following.

\begin{proposition}\label{prop-inv-prime}
Let $p$ be a prime number. 
If two welded string links $\sigma$ and $\sigma'$ are related by $V^{p}$-moves, then $\mu^{{\rm w}}_{\sigma}(I)\equiv\mu^{{\rm w}}_{\sigma'}(I)\pmod{p}$ for any sequence $I$ of length~$\leq p$. 
\end{proposition}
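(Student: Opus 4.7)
The plan is to adapt the argument of Theorem~\ref{th-inv-Vn}, now controlling total degree rather than repetition of variables. Write $R=\mathbb{Z}\langle\langle X_{1},\ldots,X_{m}\rangle\rangle$ and let $\mathcal{I}_{p}\subset R$ be the two-sided ideal of power series whose terms all have total degree at least $p$. For a sequence $I$ of length at most $p$, the invariant $\mu^{\mathrm{w}}_{\sigma}(I)$ is the coefficient of a monomial of degree at most $p-1$ in $E(\lambda_{i})$, so it is enough to prove
\[
E\left(\phi\circ\eta_{q}(l_{i})\right)\equiv E\left(\phi'\circ\eta'_{q}(l'_{i})\right)\pmod{pR+\mathcal{I}_{p}}
\]
whenever $\sigma$ and $\sigma'$ are related by a single $V^{p}$-move, using the setup and notation of the proof of Theorem~\ref{th-inv-Vn}.

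The key new algebraic input is the freshman's dream for a prime $p$: in any (not necessarily commutative) ring,
\[
(1+A)^{p}=\sum_{k=0}^{p}\binom{p}{k}A^{k}\equiv 1+A^{p}\pmod{p},
\]
the binomial expansion being valid because $1$ is central. Applied with $A=E(\phi'\circ\eta'_{q}(a'^{\varepsilon}_{ij}))-1$, which lies in the augmentation ideal of $R$ and therefore satisfies $A^{p}\in\mathcal{I}_{p}$, this yields the analogue of Claim~\ref{claim-Vn}~(1):
\[
E\left(\phi'\circ\eta'_{q}(a'^{\varepsilon p}_{ij})\right)\equiv 1\pmod{pR+\mathcal{I}_{p}}.
\]
The analogue of Claim~\ref{claim-Vn}~(2), namely that $E(\phi\circ\eta_{q}(a_{ij}))$ and $E(\phi'\circ\eta'_{q}(a'_{ij}))$ are congruent modulo $pR+\mathcal{I}_{p}$, then follows by the same induction on $q$: since $pR+\mathcal{I}_{p}$ is a two-sided ideal of $R$, the congruence is preserved under products, inverses, and conjugations.

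The step I expect to require the most care is the absence of $\mathrm{sv}$-equivalence: unlike in Theorem~\ref{th-inv-Vn}, one cannot restrict to $V^{p}$-moves whose two strands lie on distinct components. In a self-crossing $V^{p}$-move on the $i$th component, the framing exponents $s,s'$ appearing in $l_{i}=a_{i1}^{s}v$ and $l'_{i}=a'^{s'}_{i1}v'$ differ by $\pm p$, since virtualizing $p$ self-crossings of the same sign shifts the self-writhe of the $i$th component by $\pm p$; moreover the insertion in $v'$ takes the form $a'^{\pm p}_{il}$ with $a'_{il}$ itself belonging to the $i$th component. The freshman's dream handles both contributions: $(1+X_{i})^{\pm p}\equiv 1$ and $E(\phi'\circ\eta'_{q}(a'^{\pm p}_{il}))\equiv 1$ modulo $pR+\mathcal{I}_{p}$. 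Reading off the coefficient of any monomial of degree less than $p$ in the resulting congruence $E(l_{i})\equiv E(l'_{i})$ then gives $\mu^{\mathrm{w}}_{\sigma}(I)\equiv\mu^{\mathrm{w}}_{\sigma'}(I)\pmod{p}$ for every sequence $I$ with $|I|\leq p$.
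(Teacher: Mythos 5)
Your proposal is correct and takes essentially the same route as the paper: you reduce to the congruence $E\left(\phi\circ\eta_{q}(l_{i})\right)\equiv E\left(\phi'\circ\eta'_{q}(l'_{i})\right)$ modulo $p$ and terms of degree $\geq p$, observe that $l'_{i}$ differs from $l_{i}$ only by relabelling of generators and insertions of $p$th powers (including the framing shift $a_{i1}^{\pm p}$ in the self-crossing case, which the paper subsumes under ``inserting the $p$th powers of elements in the free group $\overline{A'}$''), and kill those insertions. Your freshman's-dream computation and the inductive transfer of the congruence are exactly the content of the claim the paper imports from \cite[Claim 3.6]{MWY19}, which you reprove inline.
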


\begin{proof}
Let $D$ and $D'$ be virtual diagrams of $m$-component welded string links $\sigma$ and $\sigma'$, respectively. 
Assume that $D$ and $D'$ are related by a single $V^{p}$-move.  
We use the same notation as in the proof of Theorem~\ref{th-inv-Vn}.  
It is enough to show that, for any $1\leq i\leq m$,  
\begin{equation*}
E\left(\phi\circ\eta_{q}\left(l_{i}\right)\right)
\overset{(p)}{\equiv}E\left(\phi'\circ\eta'_{q}\left(l'_{i}\right)\right)+(\text{terms of degree $\geq p$}).  
\end{equation*} 
By arguments similar to those in the proof of Theorem~\ref{th-inv-Vn}, 
$l'_{i}$ is obtained from $l_{i}$ by replacing $a_{kl}$ with $a'_{kl}$ for all $k,l$ and inserting the $p$th powers of elements in the free group $\overline{A'}$.  
The following claim, which was proved in~\cite{MWY19}, 
 completes the proof. 
\end{proof}

\begin{claim}[{\cite[Claim 3.6]{MWY19}}]
(1)~For any word $w$ in $\alpha_{1},\ldots,\alpha_{m}$, we have 
\[
E\left(w^{p}\right)
\overset{(p)}{\equiv}
1+(\text{terms of degree $\geq p$}). 
\]

\noindent
(2)~For any $1\leq i\leq m$ and $1\leq j\leq r(i)$, we have 
\[
E\left(\phi\circ\eta_{q}\left(a_{ij}\right)\right)
\overset{(p)}{\equiv} 
E\left(\phi'\circ\eta'_{q}\left({a'}_{ij}\right)\right)+(\text{terms of degree $\geq p$}). 
\] 
\end{claim}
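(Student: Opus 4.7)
The proof divides naturally into the two parts of the claim.

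For (1), the plan is a direct binomial computation. Writing $E(w) = 1 + W$, where $W$ lies in the augmentation ideal (every monomial has degree $\geq 1$, since each $E(\alpha_i^{\pm 1})$ has constant term $1$), we have
\[
E(w^p) \;=\; (1+W)^p \;=\; 1 + W^{p} + \sum_{k=1}^{p-1} \binom{p}{k} W^{k}.
\]
Primality of $p$ kills the middle sum modulo $p$, and $W^{p}$ has all monomials of degree $\geq p$. This yields $E(w^{p}) \overset{(p)}{\equiv} 1 + (\text{terms of degree} \geq p)$.

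For (2), I plan induction on $q$. The base case $q = 1$ is trivial since $\phi \circ \eta_{1}(a_{ij}) = \alpha_{i} = \phi' \circ \eta'_{1}(a'_{ij})$. For the inductive step, using $\eta_{q}(a_{i1}) = a_{i1}$ together with Milnor's recursion $\eta_{q+1}(a_{ij+1}) = \eta_{q}(v_{ij}^{-1} a_{i1} v_{ij})$ and multiplicativity of $E$ on group elements, one obtains
\[
E\bigl(\phi \circ \eta_{q+1}(a_{ij+1})\bigr) \;=\; E\bigl(\phi \circ \eta_{q}(v_{ij})\bigr)^{-1} \cdot (1 + X_{i}) \cdot E\bigl(\phi \circ \eta_{q}(v_{ij})\bigr),
\]
with the analogous identity on the primed side. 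So it suffices to verify that $E(\phi \circ \eta_{q}(v_{ij})) \overset{(p)}{\equiv} E(\phi' \circ \eta'_{q}(v'_{ij})) + (\text{terms of degree} \geq p)$. As noted in the proof of Proposition~\ref{prop-inv-prime}, $v'_{ij}$ is obtained from $v_{ij}$ by (a) relabeling each $a_{kl}$ as $a'_{kl}$, and (b) inserting finitely many factors of the form $w^{p}$ with $w \in \overline{A'}$. Since $\phi' \circ \eta'_{q}$ is a group homomorphism, each such insertion contributes the Magnus expansion of $(\phi' \circ \eta'_{q}(w))^{p}$, which is $\equiv 1$ modulo $p$ and degree $\geq p$ by part~(1). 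The remaining factors on the two sides match via the induction hypothesis, and multiplicativity of $E$ closes the argument.

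The main obstacle is keeping the bookkeeping for "$\overset{(p)}{\equiv} \cdots + (\text{terms of degree} \geq p)$" consistent under multiplication and inversion, especially since the $p$-th-power insertions occur in the middle of longer words. The convenient framework is the finite quotient $R_{p} = \mathbb{Z}\langle\langle X_{1},\ldots,X_{m}\rangle\rangle / (p, I_{p})$, where $I_{p}$ denotes the two-sided ideal consisting of power series all of whose monomials have degree $\geq p$; the claim then reads as an honest equality in $R_{p}$. Once one checks that $(p, I_{p})$ is indeed a two-sided ideal (straightforward, as any product with a series in $I_{p}$ remains in $I_{p}$) and that elements of the form $1 + (\text{positive degree})$ are units in $R_{p}$ (so that conjugation makes sense in the quotient), the manipulations above become routine equalities.
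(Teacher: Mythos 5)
Your proof is correct and takes essentially the intended route: the paper merely cites this claim from~\cite{MWY19} without reproducing the argument, but your two steps---the binomial expansion $E(w^{p})=(1+W)^{p}\overset{(p)}{\equiv}1+W^{p}$ with $W$ of positive degree for part~(1), and induction on $q$ via Milnor's recursion $\eta_{q+1}(a_{ij+1})=\eta_{q}(v_{ij}^{-1}a_{i1}v_{ij})$ with the inserted $p$-th powers absorbed by part~(1)---are exactly the argument of the cited source and mirror the paper's own proof of the mod-$n$ analogue, Claim~\ref{claim-Vn}. Your reformulation in the quotient $R_{p}=\mathbb{Z}\langle\langle X_{1},\ldots,X_{m}\rangle\rangle/(p,I_{p})$, where congruences become equalities and conjugation/inversion are handled by unit considerations, is a harmless repackaging of the paper's congruence notation and introduces no gap.
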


\section{Arrow calculus}
To show Theorems~\ref{th-2nsv} and~\ref{th-Vnsv} 
we will use {\em arrow calculus}, introduced by Meilhan and the third author in~\cite{MY}, 
which is a welded version of the theory of claspers~\cite{H}. 
In this section, we briefly recall the basic notions of arrow calculus from~\cite{MY}.

\subsection{Definitions}
\begin{definition}
Let $D$ be a virtual string link diagram. 
An immersed connected uni-trivalent tree $T$ in the plane of the diagram is called a {\em {\rm w}-tree} for $D$ if it satisfies the following: 
\begin{enumerate}
\item The trivalent vertices of $T$ are pairwise disjoint and disjoint from $D$. 

\item The univalent vertices of $T$ are pairwise disjoint and are contained in $D\setminus\{\text{crossings of $D$}\}$. 

\item All edges are oriented such that each trivalent vertex has two ingoing and one outgoing edge. 

\item All singularities of $T$ and those between $D$ and $T$ are virtual crossings. 

\item Each edge of $T$ has a number (possibly zero) of decorations $\bullet$, called {\em twists}, which are disjoint from all vertices and crossings.  
\end{enumerate}
The univalent vertices of $T$ with outgoing edges are called {\em tails}, and the unique univalent vertex of $T$ with an ingoing edge is called the {\em head}. 
Tails and the head are also called {\em endpoints} when we do not need to distinguish between them. 
The {\em terminal edge} of $T$ is the edge which is incident to the head. 
We say that $T$ is a {\em {\rm w}-tree of degree $k$} or {\em ${\rm w}_{k}$-tree} if $T$ has $k$ tails. 
In particular, a ${\rm w}_{1}$-tree is called a {\em {\rm w}-arrow}.
\end{definition}

Given a uni-trivalent tree, 
picking a univalent vertex as the head uniquely determines an orientation on all edges respecting the above rule. 
Hence, we may only indicate the orientation on {\rm w}-trees at the terminal edge. 

For a union of {\rm w}-trees, vertices are assumed to be pairwise disjoint, and crossings among edges are assumed to be virtual. 
Hereafter, diagrams are drawn with bold lines, while {\rm w}-trees are drawn with thin lines. 
Furthermore, we do not draw small circles around virtual crossings between {\rm w}-trees and between {\rm w-trees} and diagrams, 
while we keep small circles between diagrams.

\subsection{Surgery along w-trees}
The {\rm w}-trees are equipped with surgery operations on virtual diagrams. 
This subsection gives the definition of surgery along {\rm w}-trees. 

We first consider the case of {\rm w}-arrows. 
Let $A$ be a union of w-arrows for a virtual string link diagram $D$. 
{\em Surgery along $A$} on $D$ yields a new virtual string link diagram, denoted by $D_{A}$, as follows.  
Assume that there exists a disk in the plane which intersects $D\cup A$ as illustrated in Figure~\ref{surgery}. 
Then the figure indicates the result of surgery along a {\rm w}-arrow of $A$ on $D$. 
We emphasize that the surgery operation depends on the orientation of the strand of $D$ containing the tail of the {\rm w}-arrow.

\begin{figure}[htbp]
  \begin{center}
    \begin{overpic}[width=9cm]{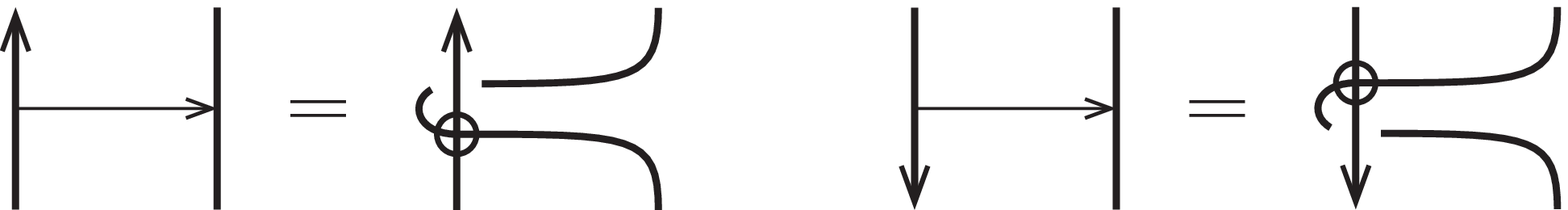}
      \put(5,-12){$D\cup A$} 
      \put(85,-12){$D_{A}$} 
      \put(152.5,-12){$D\cup A$}
      \put(232,-12){$D_{A}$} 
    \end{overpic}
  \end{center}
  \vspace{1em}
  \caption{Surgery along a {\rm w}-arrow of $A$ on $D$}
  \label{surgery}
\end{figure}

If a {\rm w}-arrow of $A$ intersects a (possibly the same) {\rm w}-arrow (resp. $D$), then the result of surgery is essentially the same as above but each intersection introduces virtual crossings illustrated in the left-hand side (resp. center) of Figure~\ref{surgery2}.
Furthermore, if a {\rm w}-arrow of $A$ has some twists, then each twist is converted to a half-twist whose crossing is virtual; see  the right-hand side of Figure~\ref{surgery2}.

\begin{figure}[htbp]
  \begin{center}
    \begin{overpic}[width=12cm]{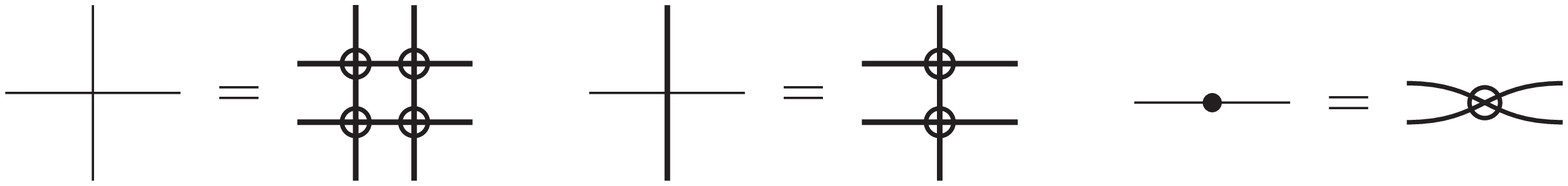}
      \put(-5,22){$A$}
      \put(14.5,-12){$A$} 
      \put(77,-12){$D_{A}$} 
      \put(122,22){$A$}
      \put(140,-12){$D$}
      \put(198,-12){$D_{A}$} 
      \put(259,-12){$A$}
      \put(317,-12){$D_{A}$} 
    \end{overpic}
  \end{center}
  \vspace{1em}
  \caption{}
  \label{surgery2}
\end{figure}

An {\em arrow presentation} for a virtual string link diagram $D$ is a pair $(V,A)$ of a virtual string link diagram $V$ without classical crossings and a union $A$ of {\rm w}-arrows for $V$ such that $V_{A}$ is welded isotopic to $D$. 
Any virtual string link diagram has an arrow presentation 
because any classical crossing can be replaced by a virtual one with a {\rm w}-arrow; see Figure~\ref{Aprst}. 
Two arrow presentations $(V,A)$ and $(V',A')$ are {\em equivalent} if $V_{A}$ and $V'_{A'}$ are welded isotopic. 
In~\cite[Section 4.3]{MY}, Meilhan and the third author gave 
a list of local moves on arrow presentations, 
which are called {\em arrow moves}.  
They proved that two arrow presentations are equivalent if and only if they are related by a sequence of arrow moves~\cite[Theorem~4.5]{MY}.

\begin{figure}[htbp]
  \begin{center}
    \begin{overpic}[width=6cm]{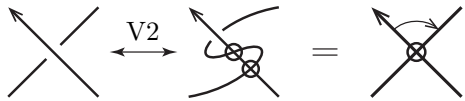}
      \put(44.5,22){V2}
    \end{overpic}
  \end{center}
  \caption{Any classical crossing can be replaced by a virtual one with a {\rm w}-arrow.}
  \label{Aprst}
\end{figure}

Now we define surgery along {\rm w}-trees. 
We start with some preliminary definitions. 
A {\em subtree} of a {\rm w}-tree is a connected union of edges and vertices of the {\rm w}-tree. 
Let $S$ be a subtree  of a {\rm w}-tree $T$ for a virtual string link diagram $D$ (possibly $T$ itself).  
For each endpoint $e$ of $S$, consider a point $e'$ on $D$ which is adjacent to $e$ such that we meet $e$ and $e'$ consecutively in this order when going along orientation on $D$. 
Joining these new points by a copy of $S$, 
we can form a new subtree $S'$ such that $S$ and $S'$ run parallel and cross only at virtual crossings. 
Then $S$ and $S'$ are called two {\em parallel subtrees}.

The {\em expansion move} (E) for a ${\rm w}_{k}$-tree, having two variations, produces four {\rm w}-trees of degree $\leq k-1$ illustrated in Figure~\ref{expansion}. 
In the figure, the dotted lines on the left-hand side of ``$\stackrel{({\rm E})}{\longrightarrow}$'' 
represent two subtrees, which form the ${\rm w}_{k}$-tree together with represented part. 
The dotted parts on the right-hand side represent parallel copies of both subtrees.

\begin{figure}[htbp]
  \begin{center}
    \begin{overpic}[width=12cm]{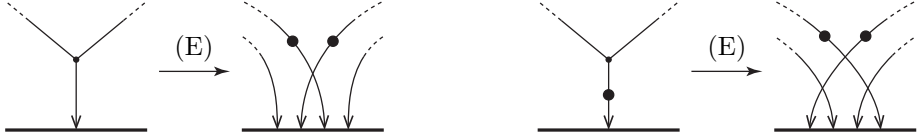}
      \put(63.5,29){(E)}
      \put(263,29){(E)}
    \end{overpic}
  \end{center}
  \caption{Expansion move}
  \label{expansion}
\end{figure}

Applying (E) recursively, we can turn any {\rm w}-tree into a union of {\rm w}-arrows. 
We call the union of {\rm w}-arrows 
the {\em expansion} of the {\rm w}-tree. 
The {\em surgery along a {\rm w}-tree} is surgery along its expansion. 
As before, $D_{T}$ denotes the result of surgery on $D$ along a union $T$ of {\rm w}-trees. 

As a natural generalization of arrow presentations, 
a {\em {\rm w}-tree presentation} for a virtual string link diagram $D$  is defined as a pair $(V,T)$ of a virtual string link diagram $V$ without classical crossings and a union $T$ of {\rm w}-trees for $V$ such that $V_{T}$ is welded isotopic to $D$.
Two {\rm w}-tree presentations $(V,T)$ and $(V',T')$ are {\em equivalent} if $V_{T}$ and $V'_{T'}$ are welded isotopic. 
Then arrow moves are extended to a set of local moves on {\rm w}-tree presentations, which are called {\em {\rm w}-tree moves}. 
It is proved that two {\rm w}-tree presentations are equivalent if and only if they are related by a sequence of {\rm w}-tree moves~\cite[Theorem 5.21]{MY}. 

In Section~\ref{sec-proofs}, 
we will use three kinds of {\rm w}-tree moves, 
{\em inverse}, {\em tails exchange} and {\em heads exchange moves} illustrated in Figure~\ref{wtree-moves}. 
The inverse move yields or deletes two parallel {\rm w}-trees which only differ by a twist on the terminal edge, 
the tails exchange move makes an exchange of two consecutive tails of {\rm w}-trees, and 
the heads exchange move makes an exchange of two consecutive heads of of {\rm w}-trees at the expense of an additional {\rm w}-tree illustrated in the lower right of Figure~\ref{wtree-moves}.

\begin{figure}[htbp]
  \begin{center}
    \begin{overpic}[width=12.5cm]{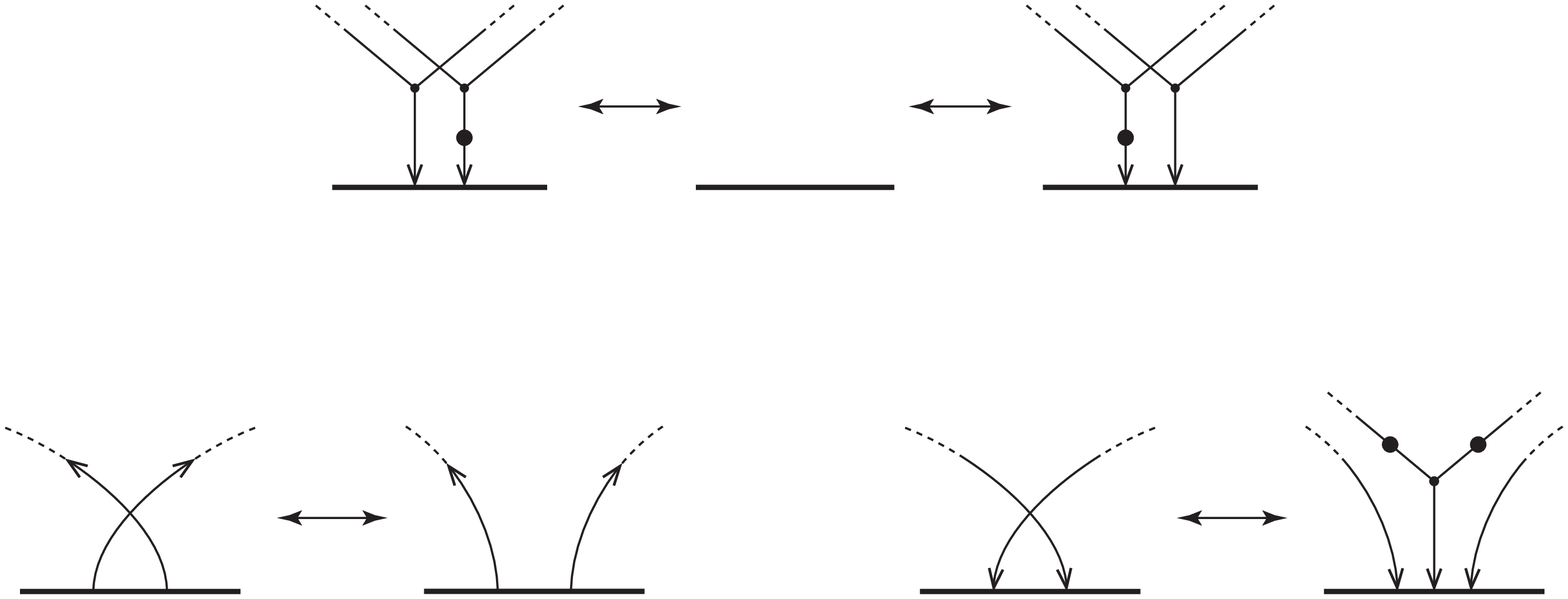}
      \put(151,76){inverse move}
      \put(33,-15){tails exchange move}
      \put(234,-15){heads exchange move}
    \end{overpic}
  \end{center}
  \vspace{1em}
  \caption{}
  \label{wtree-moves}
\end{figure}

\subsection{{\rm w}-tree moves up to {\rm sv}-equivalence}
\label{sec-HomotopyArrow}
To study virtual diagrams up to welded isotopy, 
we can work on {\rm w}-tree presentations together with {\rm w}-tree moves. 
Considering virtual diagrams up to {\rm sv}-equivalence, we can use some additional moves on {\rm w}-tree presentations. 
In this subsection, we recall these moves from~\cite[Section~9.1]{MY}. 
(Note that {\rm sv}-equivalence is called {\em homotopy} in~\cite{MY}.)

A {\em self-arrow} is a {\rm w}-arrow whose tail and head are attached to a single component of a virtual diagram. 
More generally, 
a {\em repeated {\rm w}-tree} is a {\rm w}-tree having two endpoints attached to a single component of a virtual diagram. 
Clearly, adding or deleting a self-arrow on {\rm w}-tree presentations corresponds to a self-crossing virtualization on virtual diagrams, 
i.e. surgery along a self-arrow does not change the {\rm sv}-equivalence class of a virtual diagram. 
This holds also for repeated {\rm w}-trees.

\begin{lemma}[{\cite[Lemma 9.2]{MY}}]\label{lem-repeated}
Surgery along a repeated {\rm w}-tree does not change the {\rm sv}-equivalence class of a virtual diagram. 
\end{lemma}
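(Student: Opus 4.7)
The plan is to proceed by induction on the degree $k$ of the repeated w-tree $T$.

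\emph{Base case ($k=1$).} A repeated $w_1$-tree is a self-arrow for $V$: both its head and its tail are attached to a single component. Performing surgery as in Figure~\ref{surgery} introduces exactly two classical crossings, and these are self-crossings of the component in question. Each of these two self-crossings may be virtualized via a self-crossing virtualization; after a welded isotopy, the resulting diagram returns to $D$. Hence $D \sim_{\rm sv} D_T$.

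\emph{Inductive step.} Assume the statement holds for all repeated w-trees of degree strictly less than $k \geq 2$, and let $T$ be a repeated $w_k$-tree for $D$ with two endpoints $e, e'$ lying on a single component $C$. Using the tails-exchange, heads-exchange and inverse moves of Figure~\ref{wtree-moves} — which together with welded isotopy preserve the equivalence class of the w-tree presentation — we first arrange that the trivalent vertex $v$ of $T$ closest to $e$ sits on the unique path in $T$ joining $e$ to $e'$. Applying the expansion move (E) at $v$ (Figure~\ref{expansion}) replaces $T$ by four w-trees $T_1, T_2, T_3, T_4$ of degree at most $k-1$ whose combined surgery yields a diagram welded isotopic to $D_T$. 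Because $v$ was chosen on the $e$-to-$e'$ path, each $T_i$ carries a parallel copy of $e$ and a parallel copy of $e'$ at its endpoints; hence each $T_i$ is itself a repeated w-tree. Applying the inductive hypothesis successively to $T_1, \dots, T_4$ shows that each surgery preserves the sv-equivalence class, and therefore $D \sim_{\rm sv} D_T$.

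\emph{Main obstacle.} The delicate point is verifying that the preliminary w-tree moves can indeed be arranged so that the single expansion at $v$ produces four repeated children; in other words, that the ``repeatedness'' of $T$ is inherited by all pieces of its expansion. A secondary subtlety is that the heads-exchange move introduces an auxiliary w-tree (see the lower right of Figure~\ref{wtree-moves}); one must check that this auxiliary tree is again a repeated w-tree of degree $\leq k-1$ — a direct consequence of the fact that the two heads being exchanged lie on the same component — so that it too is absorbed by the induction.
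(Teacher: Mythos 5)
You should first note that the paper does not prove this lemma at all: it is quoted verbatim from Meilhan--Yasuhara \cite[Lemma 9.2]{MY}, so your argument has to stand on its own, and as written its inductive step contains a genuine error. The expansion move (E) of Figure~\ref{expansion} is performed at the trivalent vertex incident to the \emph{terminal} edge, not at an arbitrarily chosen vertex $v$; and the moves you invoke to ``arrange'' $v$ (tails exchange, heads exchange, inverse) only modify the embedding of a w-tree and its position relative to the diagram, never its combinatorial shape, so you cannot re-root the tree so that a chosen vertex becomes the expansion vertex. More fatally, your key claim about (E) is false: each of the four trees produced by an expansion carries parallel copies of the tails of exactly \emph{one} of the two subtrees meeting at the expansion vertex (together with a copy of the head), never of both. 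Consequently, if $v$ lies on the path joining $e$ to $e'$ --- your hypothesis --- then $e$ and $e'$ fall into different subtrees and \emph{no} child contains copies of both, so generically none of $T_{1},\dots,T_{4}$ is repeated. Your geometric condition is exactly backwards: the children inherit repeatedness precisely when $e$ and $e'$ lie in the \emph{same} subtree, or when one of the two repeated endpoints is the head. Even in those favorable cases only two of the four children are repeated; after deleting them by induction, the remaining two must be cancelled as parallel trees differing by a twist on the terminal edge, using the inverse move of Figure~\ref{wtree-moves} --- a step your argument omits entirely.

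The case your induction cannot reach --- two repeated \emph{tails} lying in different subtrees at the vertex adjacent to the head --- is the real content of the lemma, and your proposed fallback does not repair it: exchanging heads of trees of degrees $d_{1}$ and $d_{2}$ produces an auxiliary tree of degree $d_{1}+d_{2}$ (see Lemma~\ref{lem-HTexch} and Figure~\ref{HTexch}), and for the four trees arising from a single expansion of a ${\rm w}_{k}$-tree these degrees sum back to $k$. So the auxiliary tree is repeated but of degree $k$, not $\leq k-1$, and an induction on degree alone stalls in an infinite regress. Closing this case requires a finer induction (e.g.\ on a secondary quantity measuring how far apart the two repeated endpoints sit in the tree, so that the degree-$k$ correction trees generated by ends exchange moves are repeated with strictly smaller complexity), which is where the actual proof in \cite{MY} does its work. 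Until the inductive step is restructured along these lines, the proposal proves the lemma only in the special cases where the head lies on the repeated component or both repeated tails share a branch.
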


Exchanging a head and a tail of {\rm w}-trees of arbitrary degree can be achieved at the expense of an additional {\rm w}-tree as follows.

\begin{lemma}[{\cite[Lemma 9.3]{MY}}]\label{lem-HTexch}
Let $T_{1}$ be a ${\rm w}_{k}$-tree for a virtual diagram $D$, and let $T_{2}$ be a ${\rm w}_{l}$-tree for $D$.  
Let $T'_{1}\cup T'_{2}$ be obtained from $T_{1}\cup T_{2}$ by exchanging a tail of $T_{1}$ and the head of $T_{2}$. 
Then $D_{T_{1}\cup T_{2}}$ is {\rm sv}-equivalent to $D_{T'_{1}\cup T'_{2}\cup Y}$, 
where $Y$ denotes the ${\rm w}_{k+l}$-tree $T$ for $D$ illustrated in Figure~$\ref{HTexch}$. 
\end{lemma}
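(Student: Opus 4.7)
The plan is to prove this by induction on the total degree $k+l$, with the base case $k=l=1$ handled via a direct arrow-calculus computation, and the inductive step performed via the expansion move (E).

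For the base case, both $T_1$ and $T_2$ are w-arrows with a tail of $T_1$ and the head of $T_2$ appearing consecutively on some strand of $D$; then $Y$ is a $w_2$-tree joining the swapped endpoints. I would verify this directly by a short sequence of arrow moves: introduce a canceling pair of parallel w-arrows via an inverse move so that the head of $T_2$ can be slid past the tail of $T_1$ using the tails exchange and heads exchange moves from Figure~\ref{wtree-moves}, and then use inverse moves once more to remove leftover canceling pairs. Any self-arrows created along the way (w-arrows with both endpoints on a single component) correspond to self-crossing virtualizations and so vanish up to sv-equivalence by Lemma~\ref{lem-repeated}.

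For the inductive step, suppose $k+l \geq 3$, and without loss of generality assume $k \geq 2$ (the case $l \geq 2$ is analogous, with the roles of head and tail swapped). Apply the expansion move (E) at a trivalent vertex of $T_1$ adjacent to the exchanged tail $t$, replacing $T_1$ by w-trees of strictly smaller degree together with parallel copies of its subtrees, one piece of which inherits $t$ as a tail. The head--tail exchange of $t$ with the head of $T_2$ now involves a w-tree of degree $<k$ and $T_2$, and hence falls under the inductive hypothesis, producing an auxiliary tree of degree $<k+l$. Reassembling via the inverse of (E) and grouping terms, this auxiliary tree combines with the parallel copies of the remaining subtrees of $T_1$ to produce precisely the $w_{k+l}$-tree $Y$ of Figure~\ref{HTexch}, while the other pieces recombine into $T_1'$ and $T_2'$.

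The main obstacle will be the bookkeeping. Each application of (E) produces parallel copies of subtrees, each head--tail exchange in the induction spawns an auxiliary tree, and various intermediate trees created during the slide must be matched up and either cancelled via inverse moves or recognized as repeated w-trees that vanish by Lemma~\ref{lem-repeated}. The most delicate point is checking that the reassembly produces exactly the tree $Y$ of Figure~\ref{HTexch}, with the correct orientation and number of twists, rather than differing by some sign or by a lower-degree correction term that would require further analysis.
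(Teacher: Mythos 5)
The paper gives no proof of this lemma at all: it is imported verbatim from \cite[Lemma 9.3]{MY}, so your attempt must stand on its own. Your overall strategy (reduce to low degree via the expansion (E), treat the ${\rm w}$-arrow case directly, and delete residual repeated trees via Lemma~\ref{lem-repeated}) is the right spirit for this calculus, but both halves of your induction have genuine gaps. In the base case, the tools you name cannot perform the move: the inverse, tails exchange and heads exchange moves of Figure~\ref{wtree-moves} only exchange like endpoints (tail with tail, head with head) or create/delete inverse pairs; none of them moves a head past a tail, which is precisely what is to be proved. Inserting a cancelling pair of parallel ${\rm w}$-arrows by an inverse move merely reproduces an adjacent head--tail pair one step over, so your sketch is circular; the arrow--arrow case needs a genuinely different ingredient (e.g.\ a direct comparison of the two surgered diagrams, using the OC move and welded Reidemeister moves, which is where the correction tree actually comes from). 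Relatedly, your claim that the case $l\geq 2$ is ``analogous, with the roles of head and tail swapped'' cannot be right as stated: the calculus is asymmetric in heads and tails (tails exchange is free, heads exchange costs an extra tree), which is why the lemma itself is asymmetric.

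In the inductive step, your very first move is unavailable: (E), as defined in Figure~\ref{expansion}, splits a ${\rm w}$-tree only at the trivalent vertex adjacent to the terminal edge (the head); there is no expansion anchored at ``a trivalent vertex adjacent to the exchanged tail $t$''. If you expand $T_{1}$ legally at its head, the subtree containing $t$ is duplicated into two parallel copies, so the exchanged tail becomes two tails, you must perform two exchanges, and the induction hands you two auxiliary trees, each of degree strictly less than $k+l$. There is then no mechanism among the stated moves to recombine these ``precisely'' into $Y$: the inverse of (E) merges four parallel trees at a head-adjacent vertex, whereas in $Y$ the copy of $T_{2}$ is grafted at an \emph{internal} vertex, at the site of the old tail $t$ --- and $t$ moreover survives as a tail of $Y$, which is how $Y$ has $k+l$ tails rather than the $k+l-1$ that your ``replace the tail by a graft of $T_{2}$'' picture would produce. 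So the reassembly step, together with the verification that every residual term is a repeated ${\rm w}$-tree (so that Lemma~\ref{lem-repeated} may delete it up to {\rm sv}-equivalence), is not bookkeeping to be deferred: it is the entire content of the lemma, and it is missing from your argument.
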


\begin{figure}[htbp]
  \begin{center}
    \begin{overpic}[width=6cm]{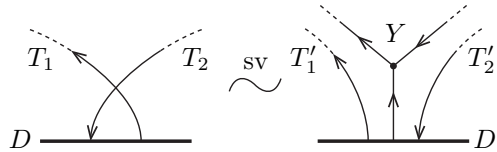}
      \put(81,28){sv}
      \put(-7,-2){$D$}
      \put(167,-2){$D$}
      \put(0,29){$T_{1}$}
      \put(58,29){$T_{2}$}
      \put(98.5,29){$T'_{1}$}
      \put(165,29){$T'_{2}$}
      \put(134,38){$Y$}
    \end{overpic}
  \end{center}
  \caption{Head-tail exchange move. Here, the notation ``$\stackrel{{\rm sv}}{\sim}$'' denotes that the virtual diagrams obtained by surgery along {\rm w}-trees are {\rm sv}-equivalent.}
  \label{HTexch}
\end{figure}

The modification of Figure~\ref{HTexch} is called a {\em head-tail exchange move}. 
Heads, tails and head-tail exchange moves are also referred to as {\em ends exchange moves}.

\section{Proofs}\label{sec-proofs}
In this section, we give the proofs of Theorems~\ref{th-2nsv} and~\ref{th-Vnsv}. 

Now we consider three local moves A, B and C on {\rm w}-tree presentations illustrated in Figures~\ref{arrow-2n} and~\ref{arrow-Vn}. 
Surgery along an A-move is equivalent to a $2n$-move whose strands are oriented parallel. 
On the other hand, surgery along a B-move is equivalent to a $V^{n}$-move. 
Furthermore, it is not hard to see that a B-move is equivalent to a C-move; see~\cite{MWY18}.

\begin{figure}[htbp]
  \begin{center}
    \begin{overpic}[width=3.5cm]{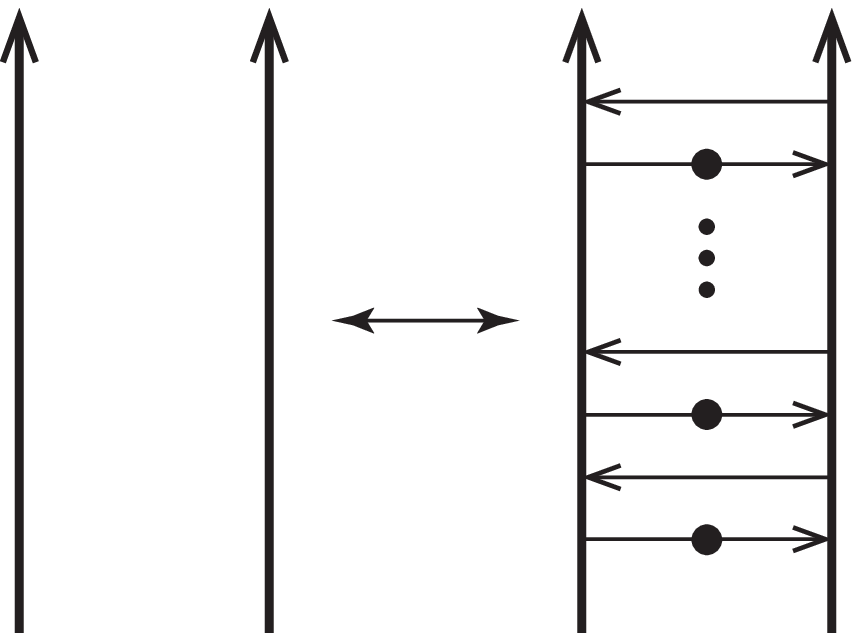}
      \put(46,40){A}
      \put(100,60){{\footnotesize $2n$}}
      \put(100,52){{\footnotesize $2n-1$}}
      \put(100,16){{\footnotesize $2$}}
      \put(100,8){{\footnotesize $1$}}
    \end{overpic}
  \end{center}
  \caption{}
  \label{arrow-2n}

\vspace{1em}

  \begin{center}
    \begin{overpic}[width=10cm]{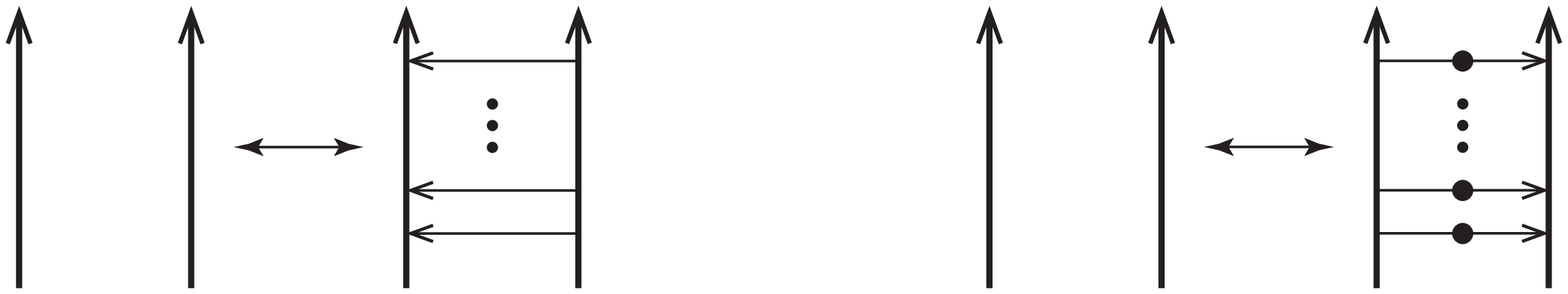}
      \put(50.5,29){B}
      \put(107,40){{\footnotesize $n$}}
      \put(107,15.5){{\footnotesize $2$}}
      \put(107,7){{\footnotesize $1$}}
      \put(227,29){C}
      \put(285,40){{\footnotesize $n$}}
      \put(285,15.5){{\footnotesize $2$}}
      \put(285,7){{\footnotesize $1$}}
    \end{overpic}
  \end{center}
  \caption{}
  \label{arrow-Vn}
\end{figure}

Using {\rm w}-tree presentations, we show the following.

\begin{proposition}\label{prop-2nVn}
Let $n$ be a positive integer. 
If two welded string links are $(2n+{\rm sv})$-equivalent, then they are $(V^{n}+{\rm sv})$-equivalent. 
\end{proposition}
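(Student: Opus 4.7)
The strategy is to work at the level of w-tree presentations via arrow calculus. Since $(V^{n}+{\rm sv})$-equivalence is generated by $V^{n}$-moves, self-crossing virtualizations, and welded isotopies, it suffices to show that every single $2n$-move on a welded string link can be realized by a sequence of $V^{n}$-moves (and welded isotopies, no sv actually needed). Via the correspondences established just above the statement --- surgery along an A-move realizes a $2n$-move between parallel strands, and surgery along a B-move realizes a $V^{n}$-move --- the task reduces to showing that surgery along an A-move can be expressed as two successive surgeries along B-moves.

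The key observation is purely combinatorial. The A-move of Figure~\ref{arrow-2n} consists of $2n$ parallel w-arrows stacked vertically between two strands, while the B-move of Figure~\ref{arrow-Vn} consists of $n$ parallel w-arrows in the identical style. Splitting the A-move horizontally into its bottom $n$ arrows and its top $n$ arrows partitions it into two disjoint B-move configurations, each contained in its own sub-disk. Because surgery along w-arrows is local and respects disjoint unions, surgery along the full A-move equals surgery along the bottom B-move followed by surgery along the top B-move. Translating back to welded diagrams, a single $2n$-move decomposes as two consecutive $V^{n}$-moves in adjacent sub-strips of the $2n$-move's disk.

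The only technical point is to verify that this splitting does not introduce any new w-tree interactions or extraneous virtual crossings beyond those of a genuine pair of B-moves. This is automatic from the definition of surgery on w-arrows: the $2n$ arrows in an A-move are pairwise disjoint within their disk, so each sub-disk inherits exactly an unadorned B-move configuration, and no ends exchange or head-tail exchange moves are required. Accordingly, no recourse to Lemma~\ref{lem-repeated} or to higher-degree trees is needed. The main obstacle, insofar as there is one, is simply to be clear that the $2n$-move's local disk can indeed be subdivided cleanly --- a matter of carefully drawing the picture rather than of any substantive w-tree argument. It follows that every $2n$-move equals two $V^{n}$-moves up to welded isotopy, and hence $(2n+{\rm sv})$-equivalence implies $(V^{n}+{\rm sv})$-equivalence.
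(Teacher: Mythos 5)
Your proposal contains a genuine gap, and it sits exactly where you claim the argument is ``purely combinatorial.'' The $2n$ {\rm w}-arrows of the A-move in Figure~\ref{arrow-2n} are \emph{not} $2n$ parallel copies of the B-move's arrows. In an arrow presentation, a classical crossing is traded for a {\rm w}-arrow whose tail lies on the overpassing strand, and in a twist region of $2n$ same-sign crossings the overpassing strand alternates at consecutive crossings (the strands swap sides after each crossing). Hence the arrows of the A-move alternate in direction along the two strands, whereas the $n$ arrows of a B-move (Figure~\ref{arrow-Vn}) all point the same way, with the C-move giving the opposite direction. So the bottom $n$ arrows of an A-move do not form a B-move configuration, and the A-move is not a disjoint stack of two B-moves. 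This is precisely why the paper's proof runs differently: starting from the trivial configuration it performs one B-move and one C-move, producing two \emph{blocks} of $n$ arrows of opposite directions (Figure~\ref{2nVn-pf}~(b)), and then must \emph{interleave} the endpoints to reach the alternating A-pattern (Figure~\ref{2nVn-pf}~(c)). Interleaving requires sliding heads past tails on the same strand, which is not a welded isotopy (in arrow calculus, tails exchange is free but a head passing a tail is not --- this is the failure of undercrossings to commute); each head-tail exchange costs an extra higher-degree {\rm w}-tree by Lemma~\ref{lem-HTexch}, and these trees are repeated, so they can only be deleted up to {\rm sv}-equivalence by Lemma~\ref{lem-repeated}. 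Your assertion that ``no sv [is] actually needed'' and that Lemma~\ref{lem-repeated} and higher-degree trees can be avoided is therefore false: {\rm sv}-equivalence is what pays for the interleaving, and if a $2n$-move really were two $V^{n}$-moves up to welded isotopy alone, you would be proving a strictly stronger statement than Proposition~\ref{prop-2nVn}, which nothing in the paper supports.

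There is a second, independent omission concerning orientations and components. Surgery along an A-move realizes only a $2n$-move whose two strands are oriented \emph{parallel} and, after the paper's first reduction, belong to distinct components. A general $2n$-move may involve antiparallel strands or two strands of a single component; the paper disposes of the same-component case by link-homotopy and reduces the antiparallel case to the parallel one using link-homotopy together with \cite[Theorem 3.1]{MWY19}, both of which are absorbed into {\rm sv}-equivalence. Your blanket claim that ``every single $2n$-move can be realized by a sequence of $V^{n}$-moves (and welded isotopies)'' silently assumes the parallel, distinct-component configuration and skips these reductions --- another place where {\rm sv}-equivalence enters essentially. In summary, the overall strategy (reduce to arrow calculus, compare A-moves with B-moves) is the paper's, but the key step is asserted rather than proved, and the assertion as stated is incorrect.
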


\begin{proof}
A $2n$-move involving two strands of a single component is realized by link-homotopy\footnote{The equivalence relation on welded string links generated by the self-crossing change and welded isotopy is also referred to as link-homotopy.}. 
Furthermore, as seen in the proof of \cite[Theorem 3.1]{MWY19}, a $2n$-move whose two strands are oriented antiparallel is realized by link-homotopy and a $2n$-move whose strands are 
oriented parallel. 
Since link-homotopy implies {\rm sv}-equivalence, 
we may now assume that the orientations of the strands of a $2n$-move are always parallel. 

Up to $(V^{n}+{\rm sv})$-equivalence, by Lemmas~\ref{lem-repeated} and~\ref{lem-HTexch}, 
we can use {\rm w}-tree moves, B-, C-moves and ends exchange moves, and delete repeated {\rm w}-trees on {\rm w}-tree presentations. 
Hence, it is enough to show that an A-move is realized by a sequence of these operations, 
since a $2n$-move is realized by surgery along an A-move. 
Figure~\ref{2nVn-pf} indicates the proof. 
In the sequence of Figure~\ref{2nVn-pf} (a)--(c), we obtain (b) from (a) by B- and C-moves, and (c) from (b) by head-tail exchange moves and deleting repeated {\rm w}-trees. 
\end{proof}

\begin{figure}[htbp]
  \begin{center}
    \begin{overpic}[width=6cm]{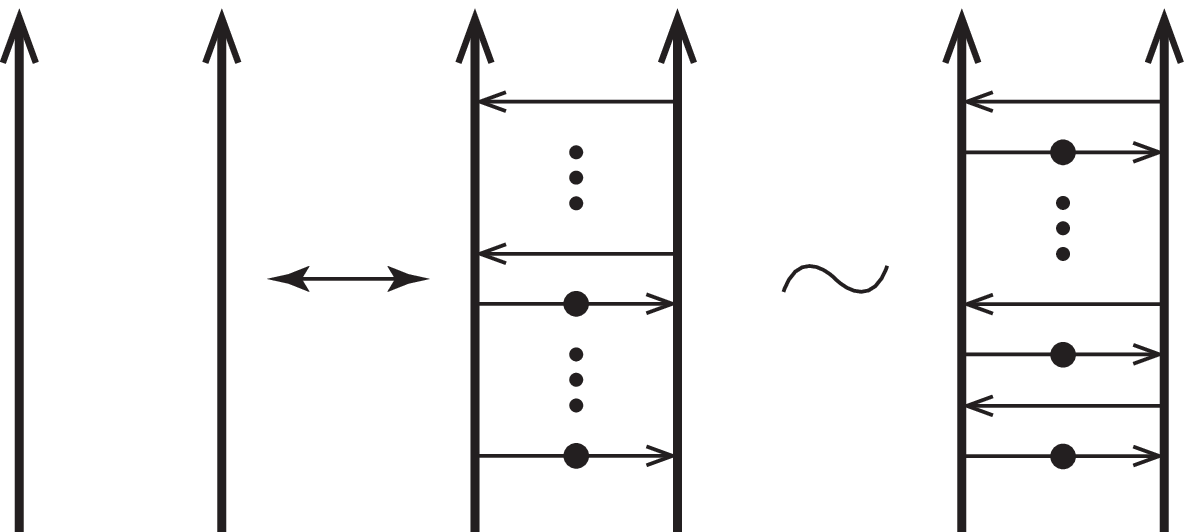}
      \put(40.5,41){B, C}
      \put(115.5,41){sv}
      \put(100,60){{\footnotesize $n$}}
      \put(100,38){{\footnotesize $1$}}
      \put(100,31){{\footnotesize $n$}}
      \put(100,8.5){{\footnotesize $1$}}
      \put(170,60){{\footnotesize $2n$}}
      \put(170,52){{\footnotesize $2n-1$}}
      \put(170,15.5){{\footnotesize $2$}}
      \put(170,8.5){{\footnotesize $1$}}
      \put(12,-14){(a)}
      \put(77,-14){(b)}
      \put(147,-14){(c)}
    \end{overpic}
  \end{center}
  \vspace{1em}
  \caption{}
  \label{2nVn-pf}
\end{figure}

For each integer $i\in\{1,\ldots,m\}$, let $\mathcal{S}_{k}(i)$ denote the set of all sequences $j_{1}\ldots j_{k}$ of $k$ distinct integers in $\{1,\ldots,m\}\setminus\{i\}$ such that $j_{r}<j_{k}$ for all $r$ $(=1,\ldots,k-1)$. 
For $I\in\mathcal{S}_{k}(i)$, 
let $T_{Ii}$ be the ${\rm w}_{k}$-tree for the trivial $m$-component string link diagram $\mathbf{1}_{m}$ illustrated in Figure~\ref{representative}, 
and let $\overline{T}_{Ii}$ be obtained from $T_{Ii}$ by inserting a twist in the terminal edge. 
Set 
$W_{Ii}=(\mathbf{1}_{m})_{T_{Ii}}$ and $W_{Ii}^{-1}=(\mathbf{1}_{m})_{\overline{T}_{Ii}}$. 
We remark that $W_{Ii}*W_{Ii}^{-1}$ is welded isotopic to $\mathbf{1}_{m}$ by applying an inverse move to $T_{Ii}\cup \overline{T}_{Ii}$, where the notation ``$*$'' denotes the stacking product. 
In~\cite{MY}, a complete list of representatives for welded string links up to {\rm sv}-equivalence was given in terms of {\rm w}-trees and welded Milnor invariants as follows.

\begin{figure}[htbp]
  \begin{center}
    \vspace{0.5em}
    \begin{overpic}[width=9.5cm]{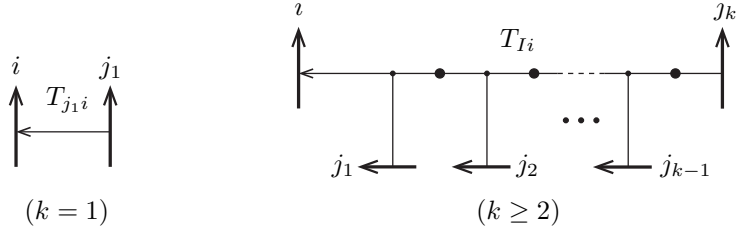}
      \put(1,38){$i$}
      \put(34,38){$j_{1}$}
      \put(6,-17){$(k=1)$}
      \put(14,26){$T_{j_{1}i}$}
      \put(184,48){$T_{Ii}$}
      \put(107,60){$i$}
      \put(264,60){$j_{k}$}
      \put(121,1){$j_{1}$}
      \put(190,1){$j_{2}$}
      \put(244,1){$j_{k-1}$}
      \put(175,-17){$(k\geq2)$}
    \end{overpic}
  \end{center}
  \vspace{1em}
  \caption{The ${\rm w}_{k}$-tree $T_{Ii}$ for $\mathbf{1}_{m}$}
  \label{representative}
\end{figure}

\begin{theorem}[{\cite[Theorem 9.4]{MY}}]
\label{th-rep-SV}
Let $\sigma$ be an $m$-component welded string link. 
Then $\sigma$ is {\rm sv}-equivalent to $\sigma_{1}*\cdots*\sigma_{m-1}$, where for each $k$, 
\[
\sigma_{k}=\prod_{i=1}^{m}\prod_{I\in\mathcal{S}_{k}(i)}
\left(W_{Ii}\right)^{x_{I}},~\text{with}~ 
x_{I}=\left\{
\begin{array}{lll}
\mu_{\sigma}^{{\rm w}}(ji) & (k=1,I=j), \\
\mu_{\sigma}^{{\rm w}}(Ii)-\mu_{\sigma_{1}*\cdots*\sigma_{k-1}}^{{\rm w}}(Ii) & (k\geq2).
\end{array}
\right.
\]
\end{theorem}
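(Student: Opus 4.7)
The plan is to use arrow calculus to present $\sigma$ as surgery on the trivial diagram $\mathbf{1}_m$ along a union of w-trees, and then to bring this union into a normal form determined by the welded Milnor invariants. Given any virtual diagram of $\sigma$, one can convert each classical crossing into a w-arrow as in Figure~\ref{Aprst}, producing a w-tree presentation of $\sigma$. Up to sv-equivalence one may then delete every repeated w-tree by Lemma~\ref{lem-repeated}, so only non-repeated w-trees need to be considered.

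I would then proceed by induction on the degree $k$ of the w-trees. For the base case $k=1$, all non-repeated w-arrows between components $j$ (tail) and $i$ (head) can be sorted using inverse, tails and heads exchange moves into a stack of copies of $W_{ji}$, with each head-tail swap introducing a w-tree of strictly higher degree together with repeated (hence deletable) terms, by Lemma~\ref{lem-HTexch}. The net exponent of $W_{ji}$, modulo higher-degree corrections, must equal the sv-invariant $\mu^{{\rm w}}_{\sigma}(ji)$, since this quantity is additive under stacking at leading order and $\mu^{{\rm w}}_{W_{ji}}(ji)=1$. Setting $x_{j} = \mu^{{\rm w}}_{\sigma}(ji)$ and absorbing the higher-degree error into the remaining w-trees yields the factor $\sigma_{1}$.

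For the inductive step, suppose $\sigma$ has been reduced up to sv-equivalence to $\sigma_{1}*\cdots*\sigma_{k-1}*\tau$, where $\tau$ is presented by non-repeated w-trees of degree $\geq k$. The w-trees in $\tau$ of degree exactly $k$ can, via ends exchange moves and standard w-tree moves, be sorted to be consecutive and brought into the canonical shape $T_{Ii}$ for some $I \in \mathcal{S}_{k}(i)$, at the cost only of w-trees of degree $\geq k+1$ and of repeated w-trees (which vanish by Lemma~\ref{lem-repeated}). The exponent $x_{I}$ of the resulting $W_{Ii}$ is then determined by requiring the degree-$k$ Milnor invariants of the whole product to agree with those of $\sigma$: since $\mu^{{\rm w}}_{W_{Ii}}(Ji)$ equals $1$ for $J=I$ and $0$ for any other non-repeated $J \in \mathcal{S}_{k}(i)$, while all Milnor invariants of $W_{Ii}$ vanish in degrees below $k$, stacking additivity at leading order forces $x_{I} = \mu^{{\rm w}}_{\sigma}(Ii) - \mu^{{\rm w}}_{\sigma_{1}*\cdots*\sigma_{k-1}}(Ii)$.

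The main obstacle will be the careful bookkeeping of the error terms produced by ends exchange moves: each such swap introduces a new w-tree of strictly higher degree, and one must verify that these can always be deferred to the next inductive stage without disturbing the normal form already achieved at lower degrees. This is essentially a degree-filtration argument in the welded/arrow-calculus setting, analogous to Habiro's clasper calculus up to $C_{k}$-equivalence. Once this is controlled, the induction terminates after $m-1$ steps, since any non-repeated sequence uses at most $m$ distinct indices and $\mathcal{S}_{k}(i)$ is empty for $k \geq m$.
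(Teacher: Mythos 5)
The first thing to note is that the paper does not prove this statement at all: Theorem~\ref{th-rep-SV} is quoted verbatim from \cite[Theorem 9.4]{MY}, so your proposal must be measured against the proof in that reference. Your overall architecture --- replace classical crossings by {\rm w}-arrows to get a {\rm w}-tree presentation, delete repeated trees by Lemma~\ref{lem-repeated}, sort degree by degree using the exchange moves of Lemma~\ref{lem-HTexch} while deferring the higher-degree errors to the next stage, and pin down the exponents $x_{I}$ via additivity of the first non-vanishing Milnor invariants --- is indeed the architecture of the cited proof, and your base case ($k=1$, exponents equal to the virtual linking numbers $\mu^{\rm w}_{\sigma}(ji)$) and termination argument ($\mathcal{S}_{k}(i)=\emptyset$ for $k\geq m$, so trees of degree $\geq m$ are repeated and deletable) are both correct.

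There is, however, a genuine gap at the sentence asserting that the degree-$k$ trees ``can, via ends exchange moves and standard {\rm w}-tree moves, be \dots brought into the canonical shape $T_{Ii}$ for some $I\in\mathcal{S}_{k}(i)$.'' Two distinct difficulties are buried here, and neither is resolved by the moves you invoke. First, a ${\rm w}_{k}$-tree is an arbitrary uni-trivalent tree, not necessarily the linear ``comb'' of Figure~\ref{representative}; inverse, tails/heads exchange and head-tail exchange moves never change the underlying tree shape, so linearization requires the twist (antisymmetry) and IHX-type relations for {\rm w}-trees, which hold only modulo {\rm sv}-equivalence and trees of higher degree, and which you never cite. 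Second, even after linearizing, you obtain combs indexed by \emph{all} non-repeated sequences, whereas the theorem uses only $I\in\mathcal{S}_{k}(i)$ (largest tail index adjacent to the head); cutting down to this family is precisely Milnor's basic-commutator argument --- degree-$k$ trees modulo {\rm sv}-equivalence and degree $\geq k+1$ correspond to degree-$k$ commutators in the reduced free group, for which the $T_{Ii}$ with $I\in\mathcal{S}_{k}(i)$ realize a generating set --- and without it neither the sufficiency of the family $\{W_{Ii}\}$ nor the well-definedness of the $x_{I}$ is established. Relatedly, your forcing of $x_{I}=\mu^{\rm w}_{\sigma}(Ii)-\mu^{\rm w}_{\sigma_{1}*\cdots*\sigma_{k-1}}(Ii)$ silently uses the computation $\mu^{\rm w}_{W_{Ii}}(Ji)=\delta_{IJ}$ for $J\in\mathcal{S}_{k}(i)$, together with the vanishing of all shorter invariants of $W_{Ii}$ and the invariance of $\mu^{\rm w}$ under {\rm sv}-equivalence; these are supplied in \cite{MY} and \cite{ABMW-Pisa} but not by your argument. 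With these ingredients made explicit, your outline does match the cited proof.
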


\vspace{0.5em}
The following plays an important role to show Theorems~\ref{th-2nsv} and~\ref{th-Vnsv}.

\begin{lemma}\label{lem-del-wtree}
Let $n$ be a positive integer and $\e\in\{1,-1\}$. 
Then, for any $I\in\mathcal{S}_{k}(i)$ and $k\geq2$, 
$\left(W_{Ii}\right)^{\e n}$ is $(2n+{\rm sv})$-equivalent to $\mathbf{1}_{m}$. 
\end{lemma}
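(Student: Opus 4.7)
The plan is to argue by induction on $k \ge 2$, using arrow calculus to reduce $n$ parallel copies of $T_{Ii}$ (or $\overline{T}_{Ii}$) to the trivial configuration up to $(2n+{\rm sv})$-equivalence. Since surgery along an A-move realises a $2n$-move with parallel strand orientation, up to $(2n+{\rm sv})$-equivalence I am free to insert or delete any A-move configuration and to apply all {\rm w}-tree moves along with {\rm sv}-operations: deletion of repeated {\rm w}-trees (Lemma~\ref{lem-repeated}) and head-tail exchange (Lemma~\ref{lem-HTexch}).

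For the base case $k = 2$, the tree $T_{Ii}$ with $I = j_1 j_2$ has its unique trivalent vertex incident to tails on the distinct components $j_1$ and $j_2$ and head on $i$. First, I would apply the expansion move (E) at this trivalent vertex to each of the $n$ parallel copies, converting each $T_{Ii}$ into four {\rm w}-arrows joining the three involved components, for a total of $4n$ {\rm w}-arrows. Next, tails and heads exchange moves would be used to gather these arrows into two groups of $2n$ parallel {\rm w}-arrows, one group running from $j_1$ to $i$ and the other from $j_2$ to $i$. The auxiliary {\rm w}-trees introduced by each heads exchange have both of their new endpoints on the common component $i$, so they are repeated and vanish by Lemma~\ref{lem-repeated}, or else they come in inverse pairs; the residual $2n$-packs are then removed by A-moves.

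For the inductive step $k \ge 3$, I would apply the expansion move at a trivalent vertex of $T_{Ii}$ adjacent to a chosen leaf, say the tail on $j_k$. This splits $T_{Ii}$ into two parallel copies of a $w_{k-1}$-tree which, after rearrangement by exchange moves, takes the form $T_{I'i}$ with $|I'| = k-1 \ge 2$, together with auxiliary {\rm w}-arrow pieces. Applied to all $n$ parallel copies of $T_{Ii}$, the two parallel $w_{k-1}$-trees within each expansion combine (after inverse moves) into $n$ parallel copies of $T_{I'i}$, which are $(2n+{\rm sv})$-trivial by the inductive hypothesis. The arrow pieces come in $2n$-packs removable by A-moves, and any residual higher-degree {\rm w}-trees produced by the exchange moves are either repeated or pair off by inverse moves.

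The main obstacle I expect is the combinatorial bookkeeping: every heads or tails exchange introduces an auxiliary {\rm w}-tree, and each application of (E) produces multiple subtrees of various degrees. The crux is to verify systematically that every non-trivial contribution falls into one of three categories---repeated (hence sv-trivial), paired with an opposite-twist partner (cancelled by an inverse move), or a collection of $n$ parallel copies of some $T_{I'i'}$ with $|I'| \ge 2$ (handled by induction). The caterpillar structure of $T_{Ii}$, in which all leaves sit on pairwise distinct components, is precisely what makes this accounting tractable at each inductive step.
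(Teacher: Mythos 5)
There is a genuine gap, and it sits exactly at the crux of your base case. In the heads exchange move (Figure~\ref{wtree-moves}), the auxiliary tree created by exchanging two heads lying on component $i$ has its \emph{head} on $i$, but its \emph{tails} are parallel copies of the tails of the two exchanged trees; when you exchange two arrows with tails on $j_{1}$ and $j_{2}$, the byproduct is a ${\rm w}_{2}$-tree with endpoints on the three distinct components $j_{1}$, $j_{2}$, $i$. It is therefore \emph{not} a repeated tree, and Lemma~\ref{lem-repeated} does not delete it. Your arrow count is also off: expanding the $n$ parallel copies of $T_{j_{1}j_{2}i}$ by (E) produces, on each of $j_{1}$ and $j_{2}$, $n$ plain and $n$ twisted arrows in commutator order, not $2n$ like-signed arrows; once gathered, these cancel in pairs by inverse moves, so the A-move is never actually applied to them. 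The whole obstruction is thereby shunted into the non-repeated degree-$2$ byproducts, which amount again to roughly $n$ parallel copies of a $T_{I'i}$-type tree --- the argument is circular at the decisive point. Indeed it has to fail as stated: if your bookkeeping were correct, the byproducts would vanish by sv-moves and inverse moves alone, making $\left(W_{Ii}\right)^{n}$ sv-trivial, which contradicts the fact that $\mu^{\rm w}(j_{1}j_{2}i)=n\neq 0$ is an sv-equivalence invariant (Theorem~\ref{th-rep-SV}). The same circularity infects your inductive step: making the pairs $T',\overline{T'}$ and $A,\overline{A}$ adjacent forces heads exchanges whose byproducts are degree-$k$ trees with all endpoints on pairwise distinct components, i.e. copies of $\pm T_{Ii}$ itself.

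The missing idea is that an A-move creates or deletes only $2n$ parallel ${\rm w}$-\emph{arrows}, so any proof must explain how $2n$ arrows are traded for $n$ parallel copies of a higher-degree tree modulo deletable junk --- and this is where the hypothesis $k\geq 2$ does its work (note the lemma is false for $k=1$, since $\mu^{\rm w}(ij)-\mu^{\rm w}(ji)$ survives $(2n+{\rm sv})$-equivalence). The paper's proof does precisely this, directly and without induction on $k$ or expansion into arrows: starting from $\mathbf{1}_{m}$, an inverse move creates $T_{Ii}\cup\overline{T}_{Ii}$, a single A-move inserts $2n$ parallel arrows, and ends exchange moves performed along the $j_{k-1}$th component --- chosen so that every exchange byproduct has two endpoints on a single component, hence is repeated and deletable by Lemma~\ref{lem-repeated} --- reorganize the picture into $n$ parallel trees together with $2n$ arrows, which a final inverse move and A-move reduce to a ${\rm w}$-tree presentation of $\left(W_{Ii}\right)^{n}$ (Figure~\ref{del-wtree}). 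Your induction scheme could in principle be repaired, but only by incorporating this arrow-to-tree conversion; as written, the three-way classification (repeated, inverse-paired, $2n$-pack) does not cover the terms that actually carry $\mu^{\rm w}(Ii)$.
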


\begin{proof}
Since $W_{Ii}^{-n}*W_{Ii}^{n}$ is welded isotopic to $\mathbf{1}_{m}$, it suffices to show the case $\e=1$, i.e.  
$\left(W_{Ii}\right)^{n}$ is $(2n+{\rm sv})$-equivalent to~$\mathbf{1}_{m}$ for a sequence $I=j_{1}\ldots j_{k}\in\mathcal{S}_{k}(i)$ $(k\geq2)$. 
Up to $(2n+{\rm sv})$-equivalence, 
we can use {\rm w}-tree moves, A-moves and ends exchange moves, and delete repeated {\rm w}-trees on {\rm w}-tree presentations. 
We relate $\mathbf{1}_{m}$ to a {\rm w}-tree presentation for $(W_{Ii})^{n}$ by a sequence of these operations. 

Figure~\ref{del-wtree}~(a)--(c) describes the intermediates in the sequence between $\mathbf{1}_{m}$ and a {\rm w}-tree presentation for $(W_{Ii})^{n}$. 
In the sequence, we obtain (a) from $\mathbf{1}_{m}$ by an inverse move, and (b) from (a) by an A-move. 
We obtain (c) from (b) by ends exchange moves on the $j_{k-1}$th component of $\mathbf{1}_{m}$ and deleting repeated {\rm w}-trees.
Finally, we obtain a {\rm w}-tree presentation for $(W_{Ii})^{n}$ from (c) by an inverse move and an A-move. 
\end{proof}

\begin{figure}[htbp]
  \begin{center}
    \vspace{0.5em}
    \begin{overpic}[width=9cm]{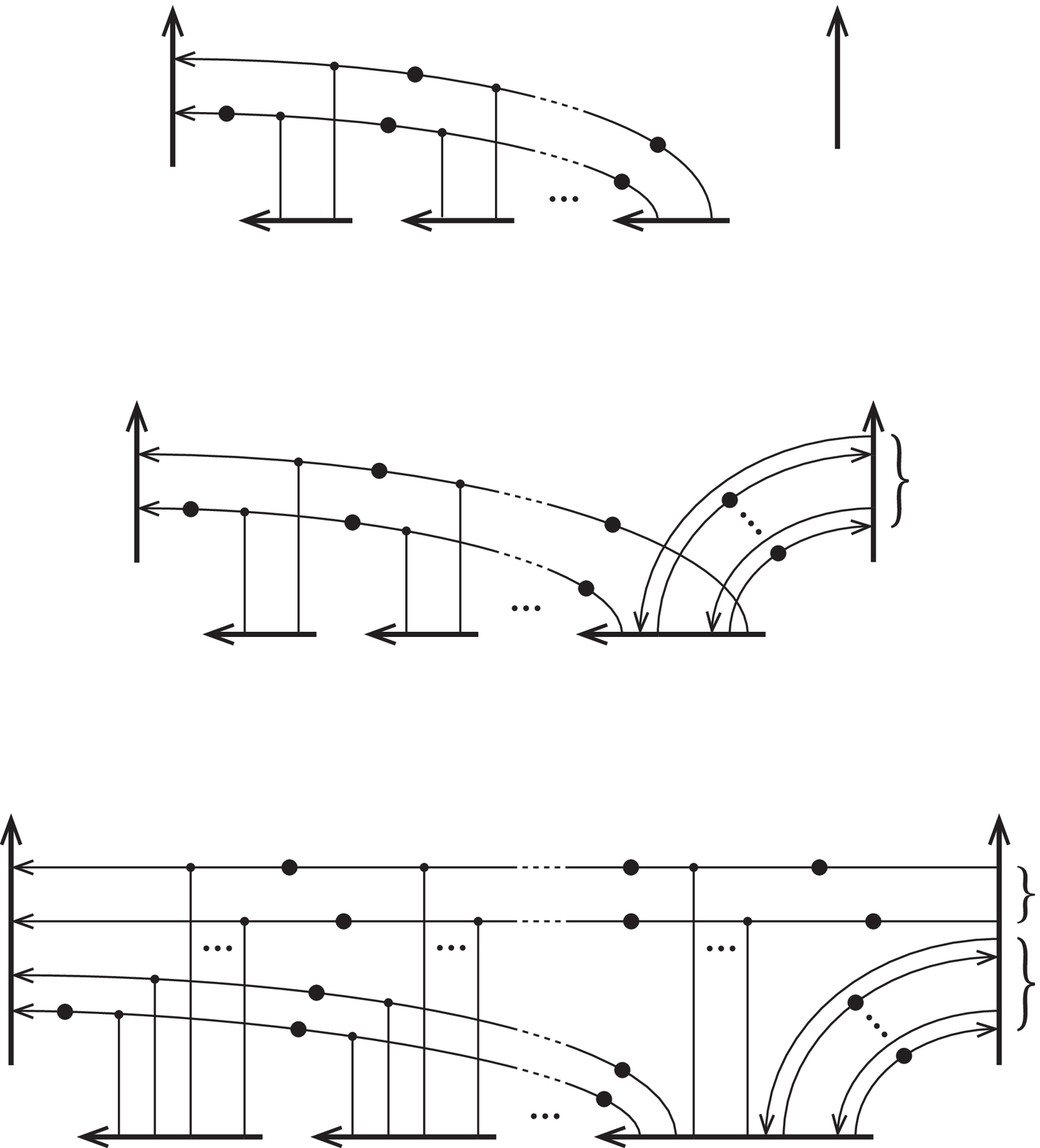}
      \put(119,206){(a)}
      \put(119,109){(b)}
      \put(119,-15){(c)}
      \put(41,287){{\small $i$}}
      \put(203,287){{\small $j_{k}$}}
       \put(53,222){{\small $j_{1}$}}
      \put(93,222){{\small $j_{2}$}}
      \put(181,222){{\small $j_{k-1}$}}
      \put(227,162){{\small $2n$}}
      \put(258,61){{\small $n$}}
      \put(258,38){{\small $2n$}}
    \end{overpic}
  \end{center}
  \vspace{1em}
  \caption{}
  \label{del-wtree}
\end{figure}

\begin{proposition}\label{prop-rep-2nSV}
Let $\sigma$ be an $m$-component welded string link, and let $x_{I}$ be as in Theorem~$\ref{th-rep-SV}$. 
Then $\sigma$ is $(2n+{\rm sv})$-equivalent to $\tau_{1}*\cdots*\tau_{m-1}$, where  
\[
\tau_{1}=\prod_{1\leq i<j\leq m}\left(W_{ji}^{y_{j}}*W_{ij}^{z_{i}}\right)
\]
for some $y_{j},z_{i}\in\mathbb{Z}$ with 
$0\leq y_{j}<n$ and $y_{j}\equiv x_{j}\pmod{n}$, 
and where for each $k\geq2$, 
\[
\tau_{k}=\prod_{i=1}^{m}\prod_{I\in\mathcal{S}_{k}(i)}
\left(W_{Ii}\right)^{y_{I}}
\]
with $0\leq y_{I}<n$ and $y_{I}\equiv x_{I}\pmod{n}$. 
\end{proposition}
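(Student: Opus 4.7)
The plan is to start from the sv-equivalence representative $\sigma\sim_{\rm sv}\sigma_1*\cdots*\sigma_{m-1}$ given by Theorem~\ref{th-rep-SV} and reduce each exponent in this expression modulo~$n$ using $(2n+{\rm sv})$-moves. Since $(2n+{\rm sv})$-equivalence is a congruence on the stacking monoid of welded string links, any relation $\alpha\sim_{2n+{\rm sv}}\beta$ may be used to substitute $\alpha$ for $\beta$ inside a product.

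For the factors of degree $k\ge 2$, the reduction is immediate: given $I\in\mathcal{S}_k(i)$, write $x_I=y_I+q_In$ with $0\le y_I<n$; by Lemma~\ref{lem-del-wtree} we have $(W_{Ii})^{q_In}\sim_{2n+{\rm sv}}\mathbf{1}_m$, and substituting replaces $(W_{Ii})^{x_I}$ by $(W_{Ii})^{y_I}$ inside $\sigma_k$, yielding the prescribed factor $\tau_k$.

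For the degree-one part, Lemma~\ref{lem-del-wtree} does not directly apply: a w-arrow has only two endpoints, so after ends exchanges no tails can be made to coincide on a single component. Instead, for each pair $i<j$ we apply an A-move to the pair of parallel strands of $\mathbf{1}_m$ labelled $i$ and $j$. The resulting configuration of $2n$ w-arrows, after sorting by heads/tails exchange moves and deletion of the repeated w-trees produced along the way (using Lemma~\ref{lem-repeated}), reduces up to $(2n+{\rm sv})$-equivalence to $(W_{ji})^n*(W_{ij})^n$. Since the A-move realizes a $2n$-move, this product is $(2n+{\rm sv})$-equivalent to $\mathbf{1}_m$, giving the key identity
\[
(W_{ji})^n\ \sim_{2n+{\rm sv}}\ (W_{ij})^{-n}\qquad (i<j).
\]
Writing $x_{ji}=y_j+qn$ with $0\le y_j<n$, this permits the substitution
\[
(W_{ji})^{x_{ji}}*(W_{ij})^{x_{ij}}\ \sim_{2n+{\rm sv}}\ (W_{ji})^{y_j}*(W_{ij})^{x_{ij}-qn},
\]
so that $z_i:=x_{ij}-qn$ yields the degree-one factor in the desired form. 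A final reordering into the product $\prod_{1\le i<j\le m}$ specified in the statement is accomplished by ends exchange moves; the higher-degree correction trees so produced are absorbed into the already-reduced factors by one further round of the degree-$\ge2$ reduction above.

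The main obstacle is establishing the identity $(W_{ji})^n*(W_{ij})^n\sim_{2n+{\rm sv}}\mathbf{1}_m$, which plays for degree one the role that Lemma~\ref{lem-del-wtree} plays in higher degrees. Unlike the higher-degree case, the absence of a third endpoint prevents a direct appeal to Lemma~\ref{lem-repeated}, so one must analyse the A-move configuration on two components explicitly and verify that every correction tree produced during sorting is either repeated (hence deletable up to sv) or of degree $\ge 2$ (hence absorbed by the already-handled $\tau_k$ with $k\ge 2$).
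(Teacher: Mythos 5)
Your proposal follows the paper's proof in all essentials: both start from the normal form of Theorem~\ref{th-rep-SV}, reduce the degree-$k$ factors for $k\geq 2$ modulo $n$ via Lemma~\ref{lem-del-wtree} (together with the welded isotopy $W_{Ii}^{\e}*W_{Ii}^{-\e}\cong\mathbf{1}_{m}$, which handles negative exponents), and treat the degree-one part by means of the A-move. The differences are in presentation and in the order of operations. The paper first deforms $\sigma_{1}$ into the paired product $\sigma'_{1}=\prod_{i<j}\bigl(W_{ji}^{x_{j}}*W_{ij}^{x_{i}}\bigr)$ up to ${\rm sv}$-equivalence, citing Lemmas~\ref{lem-repeated} and~\ref{lem-HTexch}, and only then reduces exponents ``by using A-moves, ends exchange moves and inverse moves''; that terse sentence is precisely your key identity, namely that $(W_{ji})^{n}*(W_{ij})^{n}$ is $(2n+{\rm sv})$-equivalent to $\mathbf{1}_{m}$, which you isolate and justify explicitly: after one A-move on strands $i$ and $j$ of $\mathbf{1}_{m}$, every correction tree arising while sorting the $2n$ w-arrows has all its endpoints on those two strands, hence has two endpoints on a single component and is repeated, so Lemma~\ref{lem-repeated} deletes it. Making this identity explicit --- and noting that it is the only relation compatible with the invariant $\mu^{{\rm w}}(ij)-\mu^{{\rm w}}(ji)$ of Proposition~\ref{prop-inv-2n}, which settles your sign question --- is a genuine gain in readability, as is your remark that $z_{i}=x_{ij}-qn$ automatically satisfies $z_{i}\equiv x_{ij}\pmod{n}$. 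One caution, however, about your bookkeeping: you reduce exponents first and postpone the reordering into $\prod_{1\leq i<j\leq m}$ to the very end, absorbing the degree-$\geq 2$ correction trees produced by cross-pair ends exchanges into the already-reduced factors ``by one further round'' of reduction. That second round shifts the degree-$k$ exponents by the multiplicities of those correction trees, so the congruence $y_{I}\equiv x_{I}\pmod{n}$ asserted in the statement, with $x_{I}$ fixed as in Theorem~\ref{th-rep-SV}, survives only if those multiplicities vanish modulo $n$ --- a point you flag but do not verify. The paper sidesteps this feedback by performing the reordering $\sigma_{1}\rightarrow\sigma'_{1}$ first, inside the ${\rm sv}$-equivalence step and before any exponent is fixed; if you adopt that order and then apply your key identity within each pair (where all exchanges involve only the two strands concerned, so all corrections are repeated and deletable), your argument closes without the unverified absorption step.
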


\begin{proof}
It follows from Theorem~\ref{th-rep-SV} that $\sigma$ is {\rm sv}-equivalent to $\sigma_{1}*\cdots*\sigma_{m-1}$, where for each $k$ $(\geq1)$, 
\[
\sigma_{k}=\prod_{i=1}^{m}\prod_{I\in\mathcal{S}_{k}(i)}\left(W_{Ii}\right)^{x_{I}}. 
\]

For each $k\geq2$, by Lemma~\ref{lem-del-wtree} and the fact that $W_{Ii}^{\e}*W_{Ii}^{-\e}$ is welded isotopic to $\mathbf{1}_{m}$ $(\e\in\{1,-1\})$, 
$\sigma_{k}$ is $(2n+{\rm sv})$-equivalent to $\tau_{k}$.

Now consider the case $k=1$. 
Performing ends exchange moves and deleting repeated {\rm w}-trees, 
a {\rm w}-tree presentation for $\sigma_{1}$ is deformed into 
one for $\sigma'_{1}$, where 
\[
\sigma'_{1}=\prod_{1\leq i<j\leq m}\left(W_{ji}^{x_{j}}*W_{ij}^{x_{i}}\right).
\]
By Lemmas~\ref{lem-repeated} and~\ref{lem-HTexch}, $\sigma_{1}$ is {\rm sv}-equivalent to $\sigma'_{1}$. 
Furthermore, 
a {\rm w}-tree presentation for $\sigma'_{1}$ is deformed into one for $\tau_{1}$ by using A-moves, ends exchange moves and inverse moves. 
Therefore, $\sigma'_{1}$ is $(2n+{\rm sv})$-equivalent to $\tau_{1}$. 
This completes the proof. 
\end{proof}

\begin{proposition}\label{prop-rep-VnSV}
Let $\sigma$ be an $m$-component welded string link, and let $x_{I}$ be as in Theorem~$\ref{th-rep-SV}$. 
Then $\sigma$ is $(V^{n}+{\rm sv})$-equivalent to $\tau_{1}*\cdots*\tau_{m-1}$, where for each $k$, 
\[
\tau_{k}=\prod_{i=1}^{m}\prod_{I\in\mathcal{S}_{k}(i)}
\left(W_{Ii}\right)^{y_{I}}
\]
with $0\leq y_{I}<n$ and $y_{I}\equiv x_{I}\pmod{n}$. 
\end{proposition}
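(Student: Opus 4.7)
The plan is to mirror the structure of the proof of Proposition~\ref{prop-rep-2nSV}, with the key simplification that a $V^{n}$-move (equivalently, a B-move on {\rm w}-tree presentations) acts directly on $n$ parallel {\rm w}-arrows, so that the factor-rearrangement step used for $k=1$ in Proposition~\ref{prop-rep-2nSV} is not needed here. In fact, the $\tau_{1}$ of Proposition~\ref{prop-rep-VnSV} has the same product structure as $\sigma_{1}$, so the reduction proceeds block-by-block in a uniform way for all $k\geq 1$.

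Starting from Theorem~\ref{th-rep-SV}, write $\sigma\stackrel{{\rm sv}}{\sim} \sigma_{1}*\cdots*\sigma_{m-1}$ with $\sigma_{k} = \prod_{i=1}^{m}\prod_{I\in\mathcal{S}_{k}(i)} W_{Ii}^{x_{I}}$. For each pair $(I,i)$, divide $x_{I} = q_{I}n + y_{I}$ with $0\leq y_{I} < n$. It then suffices to show that $W_{Ii}^{\pm n}$ is $(V^{n}+{\rm sv})$-equivalent to $\mathbf{1}_{m}$ for every $I\in \mathcal{S}_{k}(i)$ and every $k\geq 1$; once this is established, each block $W_{Ii}^{x_{I}}$ inside $\sigma_{k}$ may be locally replaced by $W_{Ii}^{y_{I}}$ (using also that $W_{Ii}^{\e}*W_{Ii}^{-\e}$ is welded isotopic to $\mathbf{1}_{m}$), converting $\sigma_{k}$ into $\tau_{k}$.

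For $k\geq 2$, Lemma~\ref{lem-del-wtree} gives $W_{Ii}^{\pm n}\stackrel{2n+{\rm sv}}{\sim}\mathbf{1}_{m}$, and Proposition~\ref{prop-2nVn} upgrades this to $(V^{n}+{\rm sv})$-equivalence. For $k=1$, a direct argument applies: the stacked product $W_{ji}^{n}$ admits a {\rm w}-tree presentation $(\mathbf{1}_{m}, T)$ in which $T$ consists of $n$ pairwise disjoint parallel {\rm w}-arrows, each with tail on the $j$th strand and head on the $i$th strand, sitting at distinct heights. Because $i\neq j$, these arrows connect two different strands, so the configuration agrees with the input of a B-move as depicted in Figure~\ref{arrow-Vn}. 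Since surgery along a B-move realizes a $V^{n}$-move, a single $V^{n}$-move removes all $n$ arrows, yielding $W_{ji}^{n} \stackrel{V^{n}}{\sim}\mathbf{1}_{m}$; the inverse $W_{ji}^{-n}$ is then also $(V^{n}+{\rm sv})$-equivalent to $\mathbf{1}_{m}$.

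The principal technical point is to verify that the $n$-fold stacked presentation of $W_{ji}^{n}$ really does assemble, without any tangling, into the parallel configuration of a B-move. This should follow routinely from the definition of the stacking product together with the {\rm w}-tree calculus, since the $n$ copies of $T_{ji}$ appear at disjoint levels of the diagram and run between two distinct strands. Once this is in hand, performing the substitution $W_{Ii}^{x_{I}}\rightsquigarrow W_{Ii}^{y_{I}}$ in every block simultaneously transforms $\sigma_{1}*\cdots*\sigma_{m-1}$ into $\tau_{1}*\cdots*\tau_{m-1}$, completing the proof.
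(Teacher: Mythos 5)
Your proposal is correct and takes essentially the same route as the paper: the key step $(W_{Ii})^{\pm n}\stackrel{V^{n}+{\rm sv}}{\sim}\mathbf{1}_{m}$ is obtained for $k=1$ by a single $V^{n}$-move (the $n$ stacked copies of $T_{ji}$ forming exactly the parallel-arrow side of a B-move, which the paper likewise asserts without further detail) and for $k\geq 2$ from Lemma~\ref{lem-del-wtree} upgraded by Proposition~\ref{prop-2nVn}, after which the block-by-block substitution repeats the argument of Proposition~\ref{prop-rep-2nSV}. Your remark that the $\sigma_{1}\to\sigma'_{1}$ rearrangement step is unnecessary here is precisely the simplification implicit in the paper's phrase ``arguments similar to those in the proof of Proposition~\ref{prop-rep-2nSV}''.
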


\begin{proof}
For any $j\in\mathcal{S}_{1}(i)$, we see that 
$(W_{ji})^{\pm n}$ and $\mathbf{1}_{m}$ are related by a single $V^{n}$-move. 
This together with Proposition~\ref{prop-2nVn} and Lemma~\ref{lem-del-wtree} implies that  
$(W_{Ii})^{\pm n}$ is $(V^{n}+{\rm sv})$-equivalent to $\mathbf{1}_{m}$ for any $I\in\mathcal{S}_{k}(i)$ and $k\geq1$.
Therefore, the proof can be done by arguments similar to those in the proof of Proposition~\ref{prop-rep-2nSV}. 
\end{proof}

\begin{proof}[Proof of Theorem~$\ref{th-2nsv}$]
This follows from Propositions~\ref{prop-inv-2n} and~\ref{prop-rep-2nSV}. 
\end{proof}

\begin{proof}[Proof of Theorem~$\ref{th-Vnsv}$]
This follows from Theorem~\ref{th-inv-Vn} and Proposition~\ref{prop-rep-VnSV}. 
\end{proof}

\begin{remark}\label{rem-representative}
By Theorem~\ref{th-2nsv} (resp. Theorem~\ref{th-Vnsv}), 
we can conclude that Proposition~\ref{prop-rep-2nSV} (resp. Proposition~\ref{prop-rep-VnSV}) 
gives a complete list of representatives for welded string links up to $(2n+{\rm sv})$-equivalence (resp. $(V^{n}+{\rm sv})$-equivalence). 
\end{remark}

\begin{proof}[Proof of Corollary~$\ref{cor-group}$]
This follows from Remark~\ref{rem-representative} and Lemma~\ref{lem-del-wtree}. 
\end{proof}

In the rest of this paper, we discuss 
the relations between $(2n+{\rm lh})$-, $(2n+{\rm sv})$- and $(V^{n}+{\rm sv})$-equivalence. 

It is not hard to see that $(2n+{\rm lh})$-equivalence implies $(2n+{\rm sv})$-equivalence. 
Furthermore, $(2n+{\rm sv})$-equivalence implies $(V^{n}+{\rm sv})$-equivalence by Proposition~\ref{prop-2nVn}. 
The converse implications hold for classical string links as follows.

\begin{proposition}\label{prop-injective}
Let $n$ be a positive integer, and let $\sigma$ and $\sigma'$ be classical string links. 
The following assertions $(1)$, $(2)$ and $(3)$ are equivalent:  
\begin{enumerate}
\item $\sigma$ and $\sigma'$ are $(2n+\rm{lh})$-equivalent.

\item $\sigma$ and $\sigma'$ are $(2n+\rm{sv})$-equivalent.

\item $\sigma$ and $\sigma'$ are $(V^{n}+\rm{sv})$-equivalent. 
\end{enumerate}
\end{proposition}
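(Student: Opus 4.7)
The plan is to close a cycle of implications. Two of the three arrows are already in hand: the implication $(1)\Rightarrow(2)$ is immediate, since a self-crossing change is a particular instance of a self-crossing virtualization (so any move of $(2n+\mathrm{lh})$-type is a move of $(2n+\mathrm{sv})$-type), and $(2)\Rightarrow(3)$ is exactly Proposition~\ref{prop-2nVn}. So the whole content is to prove $(3)\Rightarrow(1)$ for classical $\sigma$ and $\sigma'$.

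For this implication, the strategy is to transport information through Milnor invariants, using Theorem~\ref{th-Vnsv} on the welded side and Theorem~\ref{th-classical} on the classical side, bridged by Remark~\ref{rem-weldedMilnor}. Assuming $\sigma$ and $\sigma'$ are $(V^{n}+\mathrm{sv})$-equivalent, Theorem~\ref{th-Vnsv} yields
\[
\mu^{\mathrm{w}}_{\sigma}(I)\equiv\mu^{\mathrm{w}}_{\sigma'}(I)\pmod{n}
\]
for every non-repeated sequence $I$. Since $\sigma$ and $\sigma'$ are classical string links, Remark~\ref{rem-weldedMilnor} identifies $\mu^{\mathrm{w}}_{\sigma}(I)=\mu_{\sigma}(I)$ and $\mu^{\mathrm{w}}_{\sigma'}(I)=\mu_{\sigma'}(I)$, so the congruence descends to the classical Milnor invariants. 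Theorem~\ref{th-classical} then delivers the $(2n+\mathrm{lh})$-equivalence of $\sigma$ and $\sigma'$, which is the missing implication $(3)\Rightarrow(1)$.

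There is no real obstacle here beyond correctly assembling the pieces: the only step that needs attention is the appeal to Remark~\ref{rem-weldedMilnor}, which is crucial because it upgrades the welded congruence to a congruence of classical invariants; without classicality of $\sigma$ and $\sigma'$ this final step would fail, and indeed it must fail in general, since for genuinely welded representatives the quotient ${\rm w}\mathcal{SL}(m)/(2n+\mathrm{sv})$ is infinite while ${\rm w}\mathcal{SL}(m)/(V^{n}+\mathrm{sv})$ is finite by Corollary~\ref{cor-group}.
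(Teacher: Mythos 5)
Your proof is correct and takes essentially the same route as the paper: the key implication $(3)\Rightarrow(1)$ is derived exactly as in the paper, by combining Theorem~\ref{th-Vnsv}, Remark~\ref{rem-weldedMilnor} and Theorem~\ref{th-classical} (the paper additionally proves $(2)\Rightarrow(1)$ directly by the same scheme using Theorem~\ref{th-2nsv}, whereas you recover it via the cycle $(1)\Rightarrow(2)\Rightarrow(3)\Rightarrow(1)$ --- a negligible structural difference). One small imprecision: a self-crossing change is not literally a special case of a self-crossing virtualization, but is realized by two such moves (replace the classical crossing by a virtual one, then replace that virtual crossing by the classical crossing of opposite sign, both legitimate since the equivalence is generated by the move in either direction), which is all that $(1)\Rightarrow(2)$ requires.
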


\begin{proof}
It is enough to show the implications $(2)\Rightarrow(1)$ and $(3)\Rightarrow(1)$. 

The proof of the implication $(2)\Rightarrow(1)$ is given as follows. 
If $\sigma$ and $\sigma'$ are $(2n+\rm{sv})$-equivalent, 
then $\mu^{{\rm w}}_{\sigma}(I)\equiv\mu^{{\rm w}}_{\sigma'}(I)\pmod{n}$ for any non-repeated sequence $I$ by Theorem~\ref{th-2nsv}. 
It follows from Remark~\ref{rem-weldedMilnor} that 
$\mu^{{\rm w}}_{\sigma}(I)=\mu_{\sigma}(I)$ and $\mu^{{\rm w}}_{\sigma'}(I)=\mu_{\sigma'}(I)$. 
Hence, Theorem~\ref{th-classical} completes the proof. 

Using Theorem~\ref{th-Vnsv} instead of Theorem~\ref{th-2nsv}, the implication $(3)\Rightarrow(1)$ is similarly shown. 
\end{proof}



\end{document}